\journal{}
\theoremstyle{plain}
  \newtheorem{thm}{Theorem}[section]
  \newtheorem{lem}[thm]{Lemma}
  \newtheorem{prop}[thm]{Proposition}
  \newtheorem{cor}[thm]{Corollary}
\theoremstyle{definition}
  \newtheorem{defn}[thm]{Definition}
  \newtheorem{exmp}[thm]{Example}
  \newtheorem{rem}[thm]{Remark}
\DeclareMathOperator{\dom}{dom}
\DeclareMathOperator{\cod}{cod}
\DeclareMathOperator{\ob}{ob}
\def\oarrowfill@#1#2#3#4#5{%
  $\m@th\thickmuskip0mu\medmuskip\thickmuskip\thinmuskip\thickmuskip
   \relax#5#1\mkern-7mu%
   \cleaders\hbox{$#5\mkern-2mu#2\mkern-2mu$}\hfill
   \mathclap{#3}\mathclap{#2}%
   \cleaders\hbox{$#5\mkern-2mu#2\mkern-2mu$}\hfill
   \mkern-7mu#4$%
}
\def\orightarrowfill@{%
  \oarrowfill@\relbar\relbar\circ{\to<100>}}
\newcommand\xorightarrow[2][]{%
  \ext@arrow 0055{\orightarrowfill@}{#1}{#2}}
\def\ps@pprintTitle{%
 \let\@oddhead\@empty
 \let\@evenhead\@empty
 \def\@oddfoot{\centerline{\thepage}}%
 \let\@evenfoot\@oddfoot}
\newcommand{\da}{\downarrow}
\newcommand{\Da}{\Downarrow}
\newcommand{\ua}{\uparrow}
\newcommand{\ra}{\rightarrow}
\newcommand{\lda}{\swarrow}
\newcommand{\rda}{\searrow}
\newcommand{\Lra}{\Longrightarrow}
\newcommand{\rat}{\!\rightarrowtail\!}
\newcommand{\xora}{\xorightarrow}
\newcommand{\oto}{\xora{\ \ \ \ }}
\newcommand{\bv}{\bigvee}
\newcommand{\bw}{\bigwedge}
\newcommand{\dv}{\dashv}
\newcommand{\nat}{\natural}
\newcommand{\si}{\sigma}
\newcommand{\CA}{\mathcal{A}}
\newcommand{\CB}{\mathcal{B}}
\newcommand{\CC}{\mathcal{C}}
\newcommand{\CD}{\mathcal{D}}
\newcommand{\CE}{\mathcal{E}}
\newcommand{\CJ}{\mathcal{J}}
\newcommand{\CP}{\mathcal{P}}
\newcommand{\CQ}{\mathcal{Q}}
\newcommand{\CT}{\mathcal{T}}
\newcommand{\CX}{\mathcal{X}}
\newcommand{\Bf}{{\bf f}}
\newcommand{\Bg}{{\bf g}}
\newcommand{\Bh}{{\bf h}}
\newcommand{\sI}{{\sf I}}
\newcommand{\sP}{{\sf P}}
\newcommand{\sPd}{{\sf P}^{\dag}}
\newcommand{\sS}{{\sf S}}
\newcommand{\sY}{{\sf Y}}
\newcommand{\Cat}{{\bf Cat}}
\newcommand{\CAT}{{\bf CAT}}
\newcommand{\CATB}{{\CAT\Da_c\CB}}
\newcommand{\CatB}{{\Cat\Da_c\CB}}
\newcommand{\Chu}{{\bf Chu}}
\newcommand{\CHU}{{\bf CHU}}
\newcommand{\DIS}{{\bf DIS}}
\newcommand{\Met}{{\bf Met}}
\newcommand{\Ord}{{\bf Ord}}
\newcommand{\QUANT}{{\bf QUANT}}
\newcommand{\Rel}{{\bf Rel}}
\newcommand{\Set}{{\bf Set}}
\newcommand{\SUP}{{\bf SUP}}
\newcommand{\QCAT}{\CQ\text{-}\CAT}
\newcommand{\QCHU}{\CQ\text{-}\CHU}
\newcommand{\QCat}{\CQ\text{-}\Cat}
\newcommand{\QChu}{\CQ\text{-}\Chu}
\newcommand{\co}{{\rm co}}
\newcommand{\op}{{\rm op}}
\newcommand{\dPhi}{\Phi^{\da}}
\newcommand{\uPhi}{\Phi_{\ua}}
\newcommand{\dPsi}{\Psi^{\da}}
\newcommand{\uPsi}{\Psi_{\ua}}
\newcommand{\dXi}{\Xi^{\da}}
\newcommand{\uXi}{\Xi_{\ua}}
\newcommand{\PB}{\CP\CB}
\newcommand{\PC}{\sP\CC}
\newcommand{\PD}{\sP\CD}
\newcommand{\PE}{\sP\CE}
\newcommand{\PJ}{\sP\CJ}
\newcommand{\PdB}{\CP^{\dag}\CB}
\newcommand{\PdD}{\sP^{\dag}\CD}
\newcommand{\PdE}{\sP^{\dag}\CE}
\newcommand{\PdJ}{\sP^{\dag}\CJ}
\newcommand{\sYd}{\sY^{\dag}}
\newcommand{\QB}{\CQ_{\CB}}
\begin{document}

\begin{frontmatter}



\title{The Fundamental Group as the Structure of a Dually Affine Space}


\author{Eraldo Giuli\fnref{A}}
\ead{eraldo.giuli@gmail.com}

\author{Walter Tholen\corref{cor}\fnref{A}}
\ead{tholen@mathstat.yorku.ca}

\address{Department of Mathematics and Statistics, York University, Toronto, Ontario, Canada, M3J 1P3}
\address{\rm Dedicated to the memory of Horst Herrlich}

\cortext[cor]{Corresponding author.}
\fntext[A]{Partial financial assistance by Botswana International University of Science and Technology (BIUST) and by the Natural Sciences and Engineering Research Council (NSERC) of Canada is gratefully acknowledged. This work was completed while the first author held a Visiting Professorship at BIUST in Palapye, Botswana.}

\begin{abstract}
This paper dualizes the setting of affine spaces as originally introduced by Diers for application to algebraic geometry and expanded upon by various authors, to show that the fundamental groups of pointed topological spaces appear as the structures of dually affine spaces. The dual of the Zariski closure operator is introduced, and the 1-sphere and its copowers together with their fundamental groups are shown to be examples of complete objects with respect to the Zariski dual closure operator.
\end{abstract}

\begin{keyword}
Dually affine space \sep pointed topological space \sep loop space \sep fundamental group \sep topological category \sep Zariski dual closure operator \sep separated dually affine space \sep complete dually affine space 


\MSC[2010] 	 18D99 \sep 54A99 \sep 18C05

\end{keyword}

\end{frontmatter}


\section{Introduction}

With the algebraic theory of commutative $K$-algebras (for a field $K$) serving as his role model, in \cite{Diers} Diers presented a simple categorical setting that allowed him to define and study {\em affine sets modelled by} $\CT$ (and $K$) --$\CT$-{\em sets} for short--, for any (finitary or infinitary) algebraic theory $\CT$ pertaining to a Birkhoff variety of general algebras. The setting provided an efficient framework for deriving a long list of concrete dualities (in the sense of \cite{Dimov Tholen, Porst Tholen}) that subsequently has been further extended by other authors; see in particular \cite{Giuli Hofmann}. In Diers' role model, for the algebraic set $X=K^n, n$ any cardinal, that in his setting comes equipped with the $K$-algebra of $K$-valued polynomial functions on $X$, one is especially interested in those subsets of $X$
that are the zero sets of some set of polynomials in $K[x_i]_{i\in n}$, {\em i.e.}, in the Zariski-closed subsets of $X$ that then get equipped with the restrictions of the polynomial functions.

In \cite{GiuliTA}, this paper's first author formulated Diers' setting for an arbitrary category $\CX$ (rather than ${\bf Set}$) and a distinguished $\CX$-object $K$, whose $\CT$-algebraic operations now ``live" in $\CX$, formalizing the Zariski closure as a categorical closure operator in the sense of \cite{clop} and relating
Zariski closed sets to his earlier work on completions with Br\"ummer, Colebunders, Herrlich and others; see, for example, \cite{BGH, DGLC, GiuliT0}. An $\CX$-object modelled by $\CT$ and $K$ has as its structure a $\CT$-subalgebra of $\CX (X,K)$, where the hom-set $\CX(X,K)$ inherits its $\CT$-structure from $K$. 
With the notion of closure operator categorically dualized as in \cite{dual clop}, it is clear that the setting and theory of
\cite{GiuliTA} allow for rather routine formal dualization. The purpose of the present paper is to give a first indication that such undertaking may be quite rewarding in terms of prominent examples and future applications.

While in Diers' setting one considers $\CT$-algebras of ``$K$-valued functionals" in $\CX$, in the dual setting we have re-named $K$ to $S$ in reference to our primary example and consider $\CT$-algebras of ``$S$-paths" in $\CX$. Of course,
for $\CX(X,S)$ to carry a $\CT$-algebra structure, $S$ must be a $\CT$-{\em co}algebra in $\CX$, {\em i.e.}, must come equipped with certain cooperations satisfying laws as dually prescribed by $\CT$. In our principal example, $S={\sf S}^1$ is the 1-sphere considered as an object of the homotopy category of pointed topological spaces. With $\CT$ the theory of groups, it naturally provides the loop space of a pointed topological space with its fundamental group as its dually affine structure. This example requires us to be extremely careful about the use of limits  and colimits. But coproducts do exist in the homotopy category, in particular the copowers of ${\sf S}^1$, and these suffice for the dualization exercise.

Since not all readers may find the dualization of Diers' setting straightforward, we have written this paper in a way which does not require prior reading of \cite{Diers} or \cite{GiuliTA}. Hence, we compactly present the essential properties of the
category of {\em dually affine spaces in} $\CX$ {\em modelled by} $\CT$ {\em and} $S$, as a category over both
$\CX$ and the category of $\CT$-algebras, paying special attention to the existence of dually affine spaces freely generated by a $\CT$-algebra. Other than the principal example we also consider easy examples from algebra; further examples from topology will be included in future work. We then consider the {\em Zariski dual closure operator} for dually affine spaces and the notion of {\em Zariski completeness}. Again, it is important to realize that, while the general treatment of the Zariski dual closure operator requires the existence of colimits --that will generally fail to exist in the homotopy category (see, for example, 
\cite{Arkowitz, Strom})--, it is possible to consider Zariski-closedness and -completeness in the presence of just copowers 
of the distinguished object $S$. 

As a general reference for category theory we cite \cite{AHS, Mac Lane}, and for topology and homotopy theory
standard texts like \cite{Dugundij, Willard} provide sufficient background for this paper. 

\section{Dually affine spaces modelled by a coalgebra}

Let $\CX$ be a category with small hom-sets and $S$ be a distinguished object in $\CX$ which comes with a family
of cooperations (of potentially infinite arities) on $S$ in $\CX$ which may be required to satisfy some equational laws. Here, by a {\em cooperation} $\omega$ on $S$ in $\CX$ of arity $n_{\omega}$ ($n_{\omega}$ a cardinal number) we mean an $\CX$-morphism $\omega : S\to n_{\omega} \cdot S = \coprod_{i<n_{\omega}}S_i $ with $S_i=S$, assuming that the needed copowers of $S$ exist in $\CX$. Of course, the family $(n_{\omega})_{\omega}$ defines a {\em type} (or {\em signature}) $\CT$ as used in universal algebra, and $S$ is simply a $\CT$-algebra in $\CX^{{\rm op}}$. Equational laws are best capture when one describes $\CT$ as an {\em algebraic theory} in the sense of Lawvere (finite arities) or Linton (infinite arities) -- always assuming, however, the existence of free $\CT$-algebras over $\bf Set$, which is certainly guaranteed when the arities are bounded by a fixed cardinal, in particular when they are all finite (see \cite{Manes}).

For every $X$ in $\CX$, the $\CT$-coalgebra structure of $S$ in $\CX$ provides the hom-set 
$\CX(S,X)= {\rm hom}_{\CX}(S,X)$ with a $\CT$-algebra structure in ${\bf Set}$: every cooperation $\omega$ gives the operation

$$\omega_X: \CX(S,X)^{n_{\omega}}\to \CX(S,X), (a_i)_{i<n_{\omega}}\mapsto (S\to^{\omega}n_{\omega}\cdot S\to^{[a_i]_{i<n_{\omega}}}X),$$
where $[a_i]_{i<n_{\omega}}$ is the morphism that equals $a_i$ when restricted to the $i$-th summand of the coproduct $n_{\omega}\cdot S$. Since for every $\CX$-morphism $f:X\to Y$ the map $\CX(S,f):\CX(S,X)\to \CX(S,Y)$ becomes a $\CT$-homomorphism, the covariant hom-functor of $\CX$ represented by $S$ takes values in the category of $\CT$-algebras; so we write

$$\CX(S,-):\CX \to {\rm Alg}(\CT).$$
We are now ready to set up the category

$${\rm Aff}^*_S(\CT,\CX)$$
of {\em dually} {\em affine spaces in} $\CX$ {\em modelled by} $S$ (and $\CT$): its objects are $\CX$-objects $X$ that come with a $\CT$-subalgebra $A$ of $\CX(S,X)$ (we write $A\leq\CX(S,X)$); its morphisms $f:(X,A)\to (Y,B)$ are
$\CX$-morphisms $f:X\to Y$ such that $\CX(S,f)$ maps $A$ into $B$, that is $\{f\}\cdot A \subseteq B$. Besides the forgetful functor

$$U:{\rm Aff}^*_S(\CT,\CX) \to \CX, (X,A)\mapsto X,$$
one also has the {\em structure functor}

$$\Gamma: {\rm Aff}^*_S(\CT,\CX)\to {\rm Alg}(\CT), (X,A)\mapsto A,$$
both of which will be of interest later on.

\begin{rem}\label{Diers}
For $\CX={\bf Set}^{\rm op}$, $\CT$ a Lawvere-Linton theory and $S$ a fixed $\CT$-algebra,  
${\rm Aff}^*_S(\CT,\CX)$ 
is the dual of the category of $\CT$-{\em sets}, as first introduced by Diers in \cite{Diers} and called {\em affine sets over} $S$ in \cite{DiersGeom}. Replacing sets by an arbitrary category $\CX$, the paper \cite{GiuliTA} and others
extended Diers' setting and studied the dual of the category ${\rm Aff}^*_S(\CT,\CX^{\rm op})$, calling its objects 
{\em affine} $\CX$-{\em objects modelled by} $S$, where the $\CX$-object $S$
now comes with a family of operations $\omega:X^{n_{\omega}}\to X$ in $\CX$, {\em i.e.}, $S$ is a $\CT$-algebra in $\CX$. Since in the papers cited above and in \cite{Giuli Hofmann} one already finds an extensive list of examples and applications of the Diers setting, in what follows we restrict ourselves to discussing those new examples that motivated this paper's study of 
${\rm Aff}^*_S(\CT,\CX)$.
\end{rem}

\begin{exmp}\label{examples first}
(1) For $\CX= {\bf Set}, S=1$ a singleton set and $\CT=\emptyset$ the empty type, ${\rm Aff}^*_1(\CT,\CX)$ is the quasitopos {\rm Sub}({\bf Set}) of sets $X$ equipped with a subset $A$, with maps preserving the distinguished subsets. The same category is obtained if $\CT$ is the type with one unary operation, since the only unary cooperation on 1 provides every set $X \cong {\bf Set}(1,X)$ with the identical unary operation.

(2) For a unital ring $R$ let $\CX={\bf Mod}_R$ be the category of (left-) $R$-modules. With $\CT=\emptyset$ again, 
the objects of ${\rm Aff}^*_R(\CT,\CX)$ may be described as $R$-modules $X$ equipped with a subset $A\subseteq X$, since $X\cong {\rm hom}_R(R,X)$; morphisms are $R$-linear maps preserving the distinguished subsets.

(3) Choosing $\CX={\bf Mod}_R$ again, let us now consider the $R$-linear cooperations

$$\delta: R\to R\oplus R, 1\mapsto (1,1), \; {\rm and} \;  \alpha(-): R\to R, 1\mapsto \alpha,$$
for every $\alpha \in R$. They reproduce the $R$-module operations

$$(-)\!+\!(-): X\times X \to X \; {\rm and} \; \alpha(-):X\to X$$
on every $R$-module $X$. Hence, with $\CT$ denoting the type consisting of one binary and 
$R$-many unary operations or, equivalently, the algebraic theory of $R$-modules, ${\rm Aff}^*_R(\CT,\CX)$ is the category ${\rm Sub}({\bf Mod}_R)$ of $R$-modules $X$ equipped with a submodule $A\leq X$; morphisms are $R$-linear maps preserving the distinguished submodules.

(4) Consider the category $\CX ={\bf Top}_{\bullet}$ of pointed topological spaces with continuous maps that preserve the distinguished ``base" points, and $S={\sf {S}}^1=[0,1]/(0\!\sim\!1)$ the 1-sphere with distinguished point $0=1$, provided with the binary and unary cooperations

$$\gamma:{\sf S}^1\to {\sf S}^1\vee {\sf S}^1, \; t\mapsto 
\left\{
\begin{array}{lr}
(2t,0)&{\rm if}\;t\le \frac{1}{2}\\
(2t-1,1)&{\rm if}\; t\ge \frac{1}{2}
\end{array}
\right\},
{\rm and}\; \tau:{\sf S}^1\to {\sf S}^1,\;t\mapsto1-t,
$$
where ${\sf S}^1\vee {\sf S}^1=({\sf S}^1\!\times \!\{0\})+({\sf S}^1\!\times\! \{1\})/((0,0)\!\sim\!(1,1))$ denotes the coproduct in ${\bf {Top}}_{\bullet}$, as well as with the trivial nullary cooperation ${\sf S}^1\to \bullet$, where $\bullet$ is the zero object of ${\bf Top}_{\bullet}$. For $\CT$ the type with one binary and one self-inverse unary operation, ${\rm Aff}^*_{{\bf S}^1}(\CT,{\bf Top}_{\bullet})$
has as objects pointed topological spaces $X=(X,x_0)$ that come equipped with a set $A$ of loops in $X$ (with $x_0$ as their common start- and endpoint), such that $A$ contains the constant loop in $X$ and is closed under the {\em concatenation} and {\em twist} operations

$$\gamma_X: \Omega X\!\times\!\Omega X \to \Omega X \;{\rm and}\; \tau_X:\Omega X\to \Omega X,$$
where $\Omega X:={\bf {Top}}_{\bullet}({\sf S}^1,X);$
morphisms are continuous maps that preserve the base points and the attached $\CT$-algebras of loops.
Of course, when provided with the compact-open topology, the set $\Omega X$ becomes the {\em loop space} of $X$
which, with its operations, is a so-called ${\rm A}_{\infty}$-space.

(5) Let $\CX={\bf hTop}_{\bullet}={\bf Top}_{\bullet}/\!\simeq$ be the homotopy category of pointed topological spaces, with $\simeq$ denoting the pointed homotopy relation: for $f_0, f_1: (X, x_0)\to (Y,y_0)$ in ${\bf Top}_{\bullet}$ one writes $f_0\simeq f_1$ if some continuous map 
$\varphi: X\times I \to Y$ (with $I=[0,1]$) satisfies $\varphi(-,0)=f_0, \varphi(-,1)=f_1, \varphi(x_0,t)=y_0$ for all $t\in [0,1]$; 
equivalently, if some continuous map $\varphi^{\dagger}:X\to {\rm C}(I,Y)$ (where ${\rm C}(I,Y)$ carries the compact-open topology) with $\varphi^{\dagger}(-)(0)=f_0$ and 
$\varphi^{\dagger}(-)(1)=f_1$ maps $x_0$ to the constant map with value $y_0$. This latter presentation makes it easy to see that,
 when, in addition to $f_0\simeq f_1$ one is given $g_0 \simeq g_1: (Z,z_0)\to (Y,y_0)$, then $[f_0,g_0]\simeq [f_1,g_1]:
 (X,x_0)\vee (Z,z_0)\to (Y,y_0)$. Therefore, {\em ${\bf hTop_{\bullet}}$ has binary coproducts that are formed as in ${\bf Top_{\bullet}}$, and the same claim holds for arbitrary coproducts.}
 
 Consequently, when we consider $\sf S^1$ and $\gamma$ and $\tau$ as in (4), for every pointed space $X=(X,x_0)$ we obtain the {\em fundamental group}
 
 $$\pi_1(X)=\Omega X/\!\simeq \;= {\bf hTop}_{\bullet}({\sf S}^1,X)$$
 of $X$ with the usual concatenation operation
 
 $$\gamma_X: \pi_1(X)\times \pi_1(X)\to \pi_1(X), \; (a,b)\mapsto a*b: ({\sf S}^1\to^{\gamma} {\sf S}^1\vee {\sf S}^1 \to^{[a,b]} X).$$
 Consequently, with $\CT$ the theory of groups, ${\rm Aff}^*_{{\sf S}^1}(\CT,{\bf hTop}_{\bullet})$ has as objects pointed topological spaces $X$ that come with a subgroup $A$ of their fundamental group $\pi_1(X)$, and morphisms are
 homotopy classes of morphisms  in ${\bf Top}_{\bullet}$ that preserve the given subgroups.
\end{exmp}

The category ${\rm Aff}^*_S(\CT,\CX)$ inherits essential properties from $\CX$, because of the following easily proved and known (see \cite{Diers, GiuliTA}), but important, fact:

\begin{prop}\label{topological}
The forgetful functor $U:{\rm Aff}^*_S(\CT,\CX)\to \CX$ is topological (in the sense of {\rm \cite{AHS}}).
\end{prop}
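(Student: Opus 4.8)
The plan is to verify directly the defining property of a topological functor as in \cite{AHS}: every $U$-structured source admits a unique $U$-initial lift. So let $X$ be an $\CX$-object together with a (possibly large) family of $\CX$-morphisms $(f_i:X\to Y_i)_{i\in I}$ whose codomains carry dually affine structures $B_i\leq\CX(S,Y_i)$. I claim that the $U$-initial lift is $(X,A)$, where
$$A\ :=\ \bigcap_{i\in I}\,\CX(S,f_i)^{-1}(B_i)\ =\ \{\,a\in\CX(S,X)\ :\ f_i\circ a\in B_i\ \text{for all}\ i\in I\,\}.$$
Since this is a subset of the small hom-set $\CX(S,X)$, no set-theoretic difficulty arises from $I$ being large.

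First I would check that $A$ is a $\CT$-subalgebra of $\CX(S,X)$. This rests on two routine facts about ${\rm Alg}(\CT)$: the preimage of a $\CT$-subalgebra along a $\CT$-homomorphism is a $\CT$-subalgebra, and an arbitrary intersection of $\CT$-subalgebras of a fixed algebra is again one. The relevant $\CT$-homomorphisms are the maps $\CX(S,f_i):\CX(S,X)\to\CX(S,Y_i)$, shown to be homomorphisms in the discussion preceding the statement. Unwinding this, for a cooperation $\omega$ of arity $n_{\omega}$ and elements $(a_j)_{j<n_{\omega}}$ of $A$ one uses the identity $f_i\circ[a_j]_{j<n_{\omega}}=[f_i\circ a_j]_{j<n_{\omega}}$ to obtain $f_i\circ\omega_X((a_j)_j)=\omega_{Y_i}((f_i\circ a_j)_j)\in B_i$, hence $\omega_X((a_j)_j)\in A$; nullary cooperations are handled in the same way. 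Thus $(X,A)$ is an object of ${\rm Aff}^*_S(\CT,\CX)$, and each $f_i:(X,A)\to(Y_i,B_i)$ is a morphism by the very definition of $A$.

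It remains to verify that this lift is $U$-initial, i.e. the universal property: for every object $(Z,C)$ of ${\rm Aff}^*_S(\CT,\CX)$ and every $\CX$-morphism $g:Z\to X$, the map $g$ underlies a morphism $(Z,C)\to(X,A)$ if and only if each composite $f_i\circ g$ underlies a morphism $(Z,C)\to(Y_i,B_i)$. The ``only if'' part is mere composability of morphisms. For the converse, if $\{f_i\circ g\}\cdot C=\{f_i\}\cdot(\{g\}\cdot C)\subseteq B_i$ for all $i$, then every element of $\{g\}\cdot C=\CX(S,g)(C)$ lies in $\bigcap_i\CX(S,f_i)^{-1}(B_i)=A$, so $\{g\}\cdot C\subseteq A$ and $g:(Z,C)\to(X,A)$ is a morphism. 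Finally, $A$ is visibly the largest $\CT$-subalgebra $A'\leq\CX(S,X)$ with $\{f_i\}\cdot A'\subseteq B_i$ for all $i$: any $U$-initial lift $(X,A_0)$ satisfies $\{f_i\}\cdot A_0\subseteq B_i$ (being a lift), hence $A_0\subseteq A$; and since each $f_i:(X,A)\to(Y_i,B_i)$ is a morphism, the universal property of the lift $(X,A_0)$ makes $\id_X:(X,A)\to(X,A_0)$ a morphism, i.e. $A\subseteq A_0$. Thus $A_0=A$ and the lift is unique. The only points demanding attention are the size remark for large sources and the bookkeeping with nullary cooperations, neither of which is a genuine obstacle — which is precisely why the statement is ``easily proved''.
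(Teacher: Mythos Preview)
Your argument is correct and follows exactly the paper's approach: define the initial structure as $A=\bigcap_{i\in I}\CX(S,f_i)^{-1}(B_i)$, observe it is a $\CT$-subalgebra as an intersection of preimages of subalgebras, and verify the $U$-initiality condition via the equivalence $\{g\}\cdot C\subseteq A \Leftrightarrow \forall i\,(\{f_i\cdot g\}\cdot C\subseteq B_i)$. You simply give more detail (the explicit subalgebra check, the uniqueness argument, the size remark) than the paper, which treats these as routine.
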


\begin{proof}
Given any-size family $(Y_i,B_i)$ of dually affine $\CT$-spaces over $S$ and $\CX$-morphisms $f_i:X\to Y_i \;(i\in I)$, the $U$-initial structure on $X$ is 

$$A=\{a\in \CX(S,X) \mid \forall i\in I \;(f_i \cdot a \in B_i)\}=\bigcap_{i\in I}\CX(S,f_i)^{-1}(B_i).$$
Indeed, as an intersection of $\CT$-subalgebras $A$ is a $\CT$-subalgebra, and for any dually affine $\CT$-space
$(Z,C)$ and $h:Z\to X$ in $\CX$ one has

$$\{h\}\cdot C\subseteq A \;\Leftrightarrow \;\forall i\in I\;(\{f_i\cdot h\}\cdot C \subseteq B_i),$$
which confirms the $U$-initiality of the structure $A$.
\end{proof}

\begin{rem}\label{final}
For a family $(X_i,A_i)\in {\rm Aff}^*_S(\CT,\CX)$ and $f_i:X_i\to Y$ in $\CX\; (i\in I)$, the $U$-final stucture $B$ on $Y$ is the $\CT$-subalgebra of $\CX(S,X)$ generated by 
$\bigcup_{i\in I}\{f_i\}\!\cdot \! A_i.$ Note that each $\{f_i\}\!\cdot \! A_i=\CX(S,f_i)(A_i)$ is already a $\CT$-subalgebra, so that in the case of a singleton family no generation process is needed.
\end{rem}

\begin{cor}\label{limits}
Facilitated by $U$-initial and $U$-final liftings of, respectively, limit cones and colimit cocones in $\CX$, any type of limits or colimits existing in $\CX$ exists also in ${\rm Aff}^*_S(\CT,\CX)$ and is preserved by $U$. In fact, $U$ has a left- and a right-adjoint, which provide an object $X\in \CX$ with the discrete and the indiscrete structure, respectively, given by the least and the largest $\CT$-subalgebra of $\CX(S,X)$ (generated by $\emptyset$ and being $\CX(S,X)$ itself), respectively.
 \end{cor}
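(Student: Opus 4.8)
The plan is to read everything off the topologicity of $U$ established in Proposition~\ref{topological}, together with the explicit descriptions of $U$-initial and $U$-final structures recorded there and in Remark~\ref{final}; this is the standard package of consequences of topologicity (see \cite{AHS}), but since the paper is meant to be self-contained I would sketch the argument. Recall first that $U$ is faithful, because a morphism of ${\rm Aff}^*_S(\CT,\CX)$ is an $\CX$-morphism subject to a property, so that any lift of an $\CX$-morphism to ${\rm Aff}^*_S(\CT,\CX)$ is unique.

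For limits, let $D\colon\CJ\to{\rm Aff}^*_S(\CT,\CX)$ be a diagram such that $UD$ admits a limit cone $(\pi_j\colon L\to UD_j)_{j\in\CJ}$ in $\CX$. I would take the $U$-initial lift of the source $(\pi_j)_j$, which by Proposition~\ref{topological} is the $\CT$-subalgebra $A=\bigcap_{j}\CX(S,\pi_j)^{-1}(\Gamma D_j)$ of $\CX(S,L)$; then each $\pi_j$ becomes a morphism $(L,A)\to D_j$, and these form a cone over $D$. To see that it is a limit cone, take any cone $(p_j\colon(Z,C)\to D_j)_j$ in ${\rm Aff}^*_S(\CT,\CX)$: applying $U$ yields a cone over $UD$, hence a unique $\CX$-morphism $h\colon Z\to L$ with $\pi_j\cdot h=Up_j$ for all $j$, and $U$-initiality of $A$ promotes $h$ to a morphism $(Z,C)\to(L,A)$; it is the only such lift by faithfulness of $U$. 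Since $U$ sends this limit cone to the chosen one in $\CX$ and limits are unique up to isomorphism, $U$ preserves every limit that exists in ${\rm Aff}^*_S(\CT,\CX)$.

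The colimit case is obtained by dualizing: starting from a colimit cocone $(\iota_j\colon UD_j\to L)_j$ of $UD$ in $\CX$, one forms the $U$-final lift of $(\iota_j)_j$ — by Remark~\ref{final} the $\CT$-subalgebra of $\CX(S,L)$ generated by $\bigcup_j\CX(S,\iota_j)(\Gamma D_j)$ — and then repeats the previous argument with the universal property of the colimit in $\CX$ in place of that of the limit and $U$-finality in place of $U$-initiality; preservation by $U$ follows in the same way. The two adjoints are the special cases of these liftings for the empty diagram. For the left adjoint, send $X\in\CX$ to $(X,A_0)$, where $A_0$ is the $U$-final structure of the empty family, i.e.\ the least $\CT$-subalgebra of $\CX(S,X)$ (generated by $\emptyset$); for any $g\colon X\to Y$ in $\CX$ and any $(Y,B)$, the image $\{g\}\cdot A_0=\CX(S,g)(A_0)$ is a $\CT$-subalgebra contained in the least $\CT$-subalgebra of $\CX(S,Y)$, hence in $B$, so that every $\CX$-morphism $X\to Y$ is a morphism $(X,A_0)\to(Y,B)$ and ${\rm Aff}^*_S(\CT,\CX)((X,A_0),(Y,B))=\CX(X,Y)$, exhibiting $X\mapsto(X,A_0)$ as left adjoint to $U$. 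For the right adjoint, send $X$ to $(X,\CX(S,X))$, the $U$-initial structure of the empty family; since $\{h\}\cdot C\subseteq\CX(S,X)$ holds trivially for every $h\colon Z\to X$ in $\CX$, one gets ${\rm Aff}^*_S(\CT,\CX)((Z,C),(X,\CX(S,X)))=\CX(Z,X)$, which is the required right adjoint.

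The argument is entirely formal once Proposition~\ref{topological} is in hand, so there is no real obstacle; the one point deserving a word of care is that the legs of a competing (co)cone genuinely produce a morphism of ${\rm Aff}^*_S(\CT,\CX)$ and not merely of $\CX$, and this is exactly what $U$-initiality, respectively $U$-finality, of the lifted structure guarantees (together with faithfulness of $U$ for uniqueness). It is also worth keeping in mind that the statement concerns only (co)limits already present in $\CX$, so in the homotopy-category example of Example~\ref{examples first}(5) it yields no more than the inheritance of the coproducts — in particular the copowers of $S$ — that are available there, and claims nothing about completeness or cocompleteness of ${\rm Aff}^*_S(\CT,\CX)$.
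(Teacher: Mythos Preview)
Your proposal is correct and follows exactly the approach the paper intends: the corollary is stated without proof in the paper, as an immediate consequence of the topologicity of $U$ established in Proposition~\ref{topological} (with the standard theory of topological functors in \cite{AHS} doing the work), and you have simply spelled out those standard details carefully. There is nothing to add or correct.
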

 
 The given distinguished object $S$ in $\CX$ becomes an object of ${\rm Aff}^*_S(\CT,\CX)$ when provided with the $\CT$-subalgebra $<\!1_S\!>$ generated by $\{1_S\}\subseteq \CX(S,S)$; we write
 
 $$S_1:=(S,<\!1_S\!>).$$
 An indication of the significance of the role of $S_1$ starts with the following easy observation.
 
 \begin{lem}\label{S as object}
 For any $(X,A)\in {\rm Aff}^*_S(\CT,\CX)$ and $a\in \CX(S,X)$ one has
 
 $$a\in A\;\Leftrightarrow \; a:S_1\to (X,A) \; {\rm in} \; {\rm Aff}^*_S(\CT,\CX).$$
   \end{lem}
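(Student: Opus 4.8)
The plan is to unwind the definition of a morphism in $\mathrm{Aff}^*_S(\CT,\CX)$ applied to the concrete domain $S_1=(S,<\!1_S\!>)$. By definition, a morphism $a:S_1\to(X,A)$ is an $\CX$-morphism $a:S\to X$ such that $\CX(S,a)$ maps $<\!1_S\!>$ into $A$, i.e. $\{a\}\cdot<\!1_S\!>\,\subseteq A$. So the whole statement reduces to the equivalence
$$a\in A\;\Longleftrightarrow\; \{a\}\cdot<\!1_S\!>\,\subseteq A.$$

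For the forward direction, I would observe that $\CX(S,a):\CX(S,S)\to\CX(S,X)$ is a $\CT$-homomorphism (as noted in the paragraph preceding the display defining $\CX(S,-):\CX\to\mathrm{Alg}(\CT)$), hence it sends the $\CT$-subalgebra generated by $\{1_S\}$ to the $\CT$-subalgebra generated by $\{\CX(S,a)(1_S)\}=\{a\cdot 1_S\}=\{a\}$. Thus $\CX(S,a)(<\!1_S\!>)\,=\,<\!a\!>$, the $\CT$-subalgebra of $\CX(S,X)$ generated by $\{a\}$. If $a\in A$, then since $A$ is a $\CT$-subalgebra containing $a$ we get $<\!a\!>\,\subseteq A$, which is exactly $\{a\}\cdot<\!1_S\!>\,\subseteq A$.

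For the reverse direction, I would simply note that $1_S\in<\!1_S\!>$, so $\{a\}\cdot<\!1_S\!>\,\subseteq A$ forces $a\cdot 1_S=a\in A$. Combining the two directions gives the claim.

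I do not expect any real obstacle here; the only point requiring a moment's care is the identity $\CX(S,a)(<\!1_S\!>)\,=\,<\!a\!>$, which follows from the general fact that a homomorphism of algebras carries the subalgebra generated by a set $G$ onto the subalgebra generated by the image of $G$ — a routine induction on the construction of generated subalgebras, or equivalently the statement that ``subalgebra generated by'' is functorial along homomorphisms. Everything else is immediate from the definitions of $S_1$ and of morphisms in $\mathrm{Aff}^*_S(\CT,\CX)$.
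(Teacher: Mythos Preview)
Your proof is correct and follows essentially the same approach as the paper: both arguments reduce the claim to the observation that the $\CT$-homomorphism $\CX(S,a)$ sends $1_S$ to $a$, so it maps $\{1_S\}$ into $A$ iff $a\in A$, and then (being a homomorphism into a $\CT$-subalgebra) it must send all of $<\!1_S\!>$ into $A$. Your version is slightly more explicit in identifying the image as $<\!a\!>$, but this is a cosmetic refinement of the same idea.
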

\begin{proof}
Trivially, for $a\in \CX(S,X)$, the $\CT$-homomorphism $\CX(S,a):\CX(S,S)\to \CX(S,X)$ maps $\{1_S\}$ into $A$ if, and only if, $a\in A$, and then it must map even $<\!1_S\!>$ into $A$.
\end{proof}

\begin{cor}\label{rep}
The covariant hom-functor represented by $S_1\in {\rm Aff}^*_S(\CT,\CX)$ factors through ${\rm Alg}(\CT)$ and, with this codomain, is isomorphic to $\Gamma$. Consequently, one has the diagram
$$\bfig
\Atriangle/->`->`->/[{{\rm Aff}^*_S(\CT,\CX)}`\CX`{{\rm Alg}(\CT)};U`{\Gamma \cong {\rm Aff}^*_S(\CT,\CX)(S_1,-)}`{\CX(S,-)}]
\place(500,180)[\Longleftarrow] \place(500,240)[\iota]
\efig$$
depicting the natural transformation
$$ \iota:\Gamma\to \CX(S,U-),\;\iota_{(X,A)}:A\hookrightarrow \CX(S,X).$$
\end{cor}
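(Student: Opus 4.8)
The plan is to deduce the statement almost directly from Lemma~\ref{S as object} together with the faithfulness of the topological functor $U$ (topological functors are faithful, see \cite{AHS}). First I would fix $(X,A)\in{\rm Aff}^*_S(\CT,\CX)$ and observe that, since $U$ is faithful, the map
$$U_{(X,A)}:{\rm Aff}^*_S(\CT,\CX)(S_1,(X,A))\lra \CX(S,X),\qquad g\mapsto Ug,$$
is injective; by Lemma~\ref{S as object} its image is precisely $A=\Gamma(X,A)$, so it corestricts to a bijection $\eta_{(X,A)}\colon{\rm Aff}^*_S(\CT,\CX)(S_1,(X,A))\xrightarrow{\ \sim\ }\Gamma(X,A)$. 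Using these bijections to transport the $\CT$-algebra structure of $\Gamma(X,A)$ onto the hom-set is exactly what makes the corepresentable functor ${\rm Aff}^*_S(\CT,\CX)(S_1,-)$ factor through ${\rm Alg}(\CT)$.

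Next I would check naturality and functoriality. For $f\colon(X,A)\to(Y,B)$ and $g\in{\rm Aff}^*_S(\CT,\CX)(S_1,(X,A))$, recalling from the definition of $\Gamma$ that $\Gamma(f)$ is the corestriction of $\CX(S,f)$ to a map $A\to B$, one computes
$$\Gamma(f)\big(\eta_{(X,A)}(g)\big)=\CX(S,f)(Ug)=Uf\cdot Ug=U(f\circ g)=\eta_{(Y,B)}\big({\rm Aff}^*_S(\CT,\CX)(S_1,f)(g)\big),$$
which is the naturality square for $\eta$. Hence ${\rm Aff}^*_S(\CT,\CX)(S_1,f)$ is conjugate via $\eta$ to the $\CT$-homomorphism $\Gamma(f)$, so it is itself a $\CT$-homomorphism for the transported structures; thus ${\rm Aff}^*_S(\CT,\CX)(S_1,-)$ is a well-defined functor into ${\rm Alg}(\CT)$ and $\eta$ exhibits it as naturally isomorphic to $\Gamma$.

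For the natural transformation $\iota$, I would take $\iota_{(X,A)}\colon A\hookrightarrow\CX(S,X)$ to be the inclusion; since $A$ is a $\CT$-subalgebra of $\CX(S,X)=\CX(S,U(X,A))$, this is a $\CT$-homomorphism, so $\iota=(\iota_{(X,A)})$ is a family of morphisms in ${\rm Alg}(\CT)$ from $\Gamma$ to $\CX(S,U-)$. Its naturality is immediate: again because $\Gamma(f)$ is the corestriction of $\CX(S,f)$, both legs of the square send $a\in A$ to $\CX(S,f)(a)=f\cdot a\in B\subseteq\CX(S,Y)$. Combining this with the isomorphism $\Gamma\cong{\rm Aff}^*_S(\CT,\CX)(S_1,-)$ then yields the displayed triangle, whose lower edge is $\CX(S,-)$ precomposed with $U$ and whose $2$-cell is $\iota$.

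The genuinely delicate point — and the only one that is not pure bookkeeping — is the first: the corepresentable functor ${\rm Aff}^*_S(\CT,\CX)(S_1,-)$ a priori only takes values in ${\bf Set}$, and one must produce on its hom-sets a $\CT$-algebra structure that is simultaneously functorial and compatible with $\Gamma$. I expect the honest way through this is precisely the identification above: once Lemma~\ref{S as object} and the faithfulness of $U$ let us recognize the hom-set ${\rm Aff}^*_S(\CT,\CX)(S_1,(X,A))$ as (a copy of) the $\CT$-subalgebra $\Gamma(X,A)$, there is nothing left to verify by hand. (One could instead spell out the $\CT$-operations on the hom-set directly from the cooperations of $S$ and the copowers $n_\omega\cdot S$, but that merely reconstructs the same structure with more effort.)
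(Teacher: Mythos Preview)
Your proposal is correct and follows the paper's intended route: the paper states this as an immediate corollary of Lemma~\ref{S as object} without further proof, and your argument is precisely the natural unpacking of that inference (identifying the hom-set with $A$ via the faithful $U$ and Lemma~\ref{S as object}, then transporting the $\CT$-structure). There is nothing to correct.
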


The following result has been proved in \cite{Diers} for $\CX= {\bf Set}^{\rm op}$ (in the dual setting) but may be obtained quite generally.

\begin{thm}\label{left adjoint}
The structure functor $\Gamma: {\rm Aff}^*_S(\CT,\CX)\to {\rm Alg}(\CT)$ has a left adjoint, provided that $\CX$, besides all copowers of $S$, has also coequalizers.
\end{thm}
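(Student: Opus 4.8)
The plan is to construct the left adjoint $F:\mathrm{Alg}(\CT)\to{\rm Aff}^*_S(\CT,\CX)$ explicitly on a given $\CT$-algebra $A$, and then verify the universal property directly. First I would observe that, since $\CX$ has all copowers of $S$ and coequalizers, it has all $|A|$-indexed copowers of $S$ and hence the object $|A|\cdot S=\coprod_{a\in A}S_a$; I denote by $j_a:S\to|A|\cdot S$ the $a$-th coproduct injection. The elements of $A$ should be thought of as formal ``names'' for generating paths, so the candidate carrier is a quotient of $|A|\cdot S$ that forces the cooperations of $S$ to agree with the operations of $A$. Concretely, for each $\CT$-operation $\omega$ of arity $n_\omega$ and each family $(a_i)_{i<n_\omega}$ in $A$, the cooperation $\omega:S\to n_\omega\cdot S$ composed with $[j_{a_i}]_{i<n_\omega}:n_\omega\cdot S\to|A|\cdot S$ gives a morphism $S\to|A|\cdot S$; this must be identified with $j_{\omega_A((a_i)_i)}:S\to|A|\cdot S$. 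Taking the wide coequalizer (built from coequalizers and copowers, both assumed available) of all these pairs yields a morphism $q:|A|\cdot S\to FA$ in $\CX$, and I set $p_a:=q\cdot j_a\in\CX(S,FA)$. The structure on $FA$ is then the $\CT$-subalgebra $\langle\{p_a\mid a\in A\}\rangle\le\CX(S,FA)$.

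Second, I would check that $a\mapsto p_a$ defines a $\CT$-homomorphism $\eta_A:A\to\Gamma(FA)=\langle\{p_a\}\rangle$: by construction the coequalizer relations say exactly that $p_{\omega_A((a_i)_i)}=\omega_{FA}((p_{a_i})_i)$, which is the homomorphism condition, and surjectivity onto the generating set makes the image a $\CT$-subalgebra (so $\eta_A$ is well defined with the stated codomain, indeed even with codomain $\Gamma(FA)$ after noting the subalgebra generated by the image is the image). This $\eta_A$ will be the unit.

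Third, for the universal property: given $(Y,B)\in{\rm Aff}^*_S(\CT,\CX)$ and a $\CT$-homomorphism $h:A\to B\le\CX(S,Y)$, each $h(a)\in\CX(S,Y)$ is a morphism $S\to Y$, so by the universal property of the copower there is a unique $[h(a)]_{a\in A}:|A|\cdot S\to Y$; because $h$ is a $\CT$-homomorphism, this morphism coequalizes the defining pairs above (here one uses the description of $\omega_Y$ from the excerpt, namely $\omega_Y((h(a_i))_i)=[h(a_i)]_i\cdot\omega$), hence factors uniquely through $q$ as $\bar h:FA\to Y$ in $\CX$. One then checks $\CX(S,\bar h)$ sends each generator $p_a=q\cdot j_a$ to $\bar h\cdot q\cdot j_a=h(a)\in B$, so it sends the generated $\CT$-subalgebra into $B$; thus $\bar h:(FA,\Gamma(FA))\to(Y,B)$ is a morphism in ${\rm Aff}^*_S(\CT,\CX)$ with $\Gamma(\bar h)\cdot\eta_A=h$. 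Uniqueness of $\bar h$ among such morphisms follows from uniqueness in the copower and coequalizer factorizations, together with Lemma~\ref{S as object}, which guarantees that morphisms out of $FA$ are determined by their effect on the generating paths $p_a:S_1\to FA$. The main obstacle I anticipate is purely bookkeeping rather than conceptual: carefully arranging the (possibly large) family of coequalizer pairs so that a single wide coequalizer --- reducible to ordinary coequalizers plus copowers --- does the job, and keeping straight that $\eta_A$ lands in the structure $\Gamma(FA)$ rather than merely in $\CX(S,FA)$, so that naturality of $\eta$ and the triangle identities hold on the nose. Once $F$ is defined on objects with these universal arrows, functoriality and the adjunction $F\dashv\Gamma$ follow formally.
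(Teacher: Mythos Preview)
Your proposal is correct and gives the explicit construction that the paper itself records in Remark~\ref{left adjoint explicit}(2), but the paper's \emph{proof} of the theorem takes a different, more abstract route. The paper observes that $V\Gamma$ (with $V:{\rm Alg}(\CT)\to{\bf Set}$ the forgetful functor) is representable by $S_1$ (Corollary~\ref{rep}), hence has a left adjoint because ${\rm Aff}^*_S(\CT,\CX)$ has copowers of $S_1$; since $V$ is monadic and ${\rm Aff}^*_S(\CT,\CX)$ has coequalizers, Dubuc's Adjoint Triangle Theorem then yields a left adjoint to $\Gamma$ itself. Your direct verification is more elementary and self-contained, and it makes the unit and the universal arrow visible from the start---which is precisely the content the paper defers to the remark. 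The paper's argument, by contrast, is a two-line appeal to general machinery, trading explicitness for brevity. One small comment: your invocation of Lemma~\ref{S as object} for uniqueness is unnecessary; uniqueness already follows from $q$ being an epimorphism together with the copower universal property, as you essentially say just before.
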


\begin{proof}
With $V:{\rm Alg}(\CT)\to {\bf Set}$ denoting the forgetful functor, $V\Gamma$ is representable by Corollary \ref{rep},
and by Corollary \ref{limits}, ${\rm Aff}^*_S(\CT,\CX)$ has all copowers of $S_1$, which guarantees the existence of a left adjoint to $V\Gamma$. Since ${\rm Aff}^*_S(\CT,\CX)$ has also coequalizers and $V$ is monadic, the (generalized version of) Dubuc's Adjoint Triangle Theorem (as given in Korollar (7) of \cite{Tholen triangle} and Exercise II.3.K(2) of \cite{monoidal topology}) assures us of the existence of a left adjoint to $\Gamma$.
\end{proof}

\begin{rem}\label{left adjoint explicit}
(1) For $\CX={\bf Set}^{\rm op}$ (see Remark \ref{Diers}), Diers \cite{Diers} gives an easy explicit description of the left adjoint to $\Gamma$. It assigns to a $\CT$-algebra $D$ the set $X(D)={\rm Alg}(\CT)(D,S)$, equipped with the 
$\CT$-subalgebra $A(D)=\{ \epsilon_D(d) \mid d\in D\} \le S^{X(D)}$, where $\epsilon_D(d): X(D)\to S$ is the evaluation map at $d$.

(2) In the general situation the proof of the Theorem gives the following recipe of how to construct a $\Gamma$-universal arrow for a $\CT$-algebra $D$, {\em i.e.}, for a set $D$ quipped with operations $\tilde{\omega}:D^{n_{\omega}}\to D$ for every given cooperation $\omega$ of $S$ in $\CX$. Consider the $\CT$-algebra that gives the structure of the copower $D\cdot S_1$ in 
${\rm Aff}^*_S(\CT,\CX)$, {\em i.e.}, the $\CT$-subalgebra $J$ of $\CX(S,D\cdot S)$ generated by the set of coproduct injections $j_d:D\to D\cdot S$ ($d\in D$).
One must now ``make" the map $D\to J, \; d\mapsto j_d,$ a $\CT$-homomorphism, by forming the joint coequalizer $q: D\cdot S\to Q$
of all pairs $(j_{\tilde{\omega}((d_i)_{i<n_{\omega}})},[j_{d_i}]_{i<n_{\omega}}\cdot \omega)$
of $\CX$-morphisms as depicted in the (generally non-commutative!) diagram

$$\bfig
\Atriangle/<-`->`->/[{n_{\omega}\cdot S}`S`{D\cdot S,};{\omega}`{[j_{d_i}]_{i<n_{\omega}}}`{j_{\tilde{\omega}((d_i)_{i<n_{\omega}})}}]
\efig$$
one pair for every given $\omega$ and every family $d_i\in D\;(i<n_{\omega})$. Then $D\to \Gamma(Q,\{q\}\cdot J),\; d\mapsto q\cdot j_d,$ is the desired $\Gamma$-universal arrow.
\end{rem}

There is an important special case when no coequalizers are needed, that is, when the $\CT$-algebra $D$ is free, so that
$D\cong Fn$ for some set $n$ and $F \dashv V: {\rm Alg}(\CT)\to{\bf Set}$. Indeed, since $V\Gamma$ is represented by $S_1$, the $n$-th copower of $S_1$ in ${\rm Aff}^*_S(\CT,\CX)$ is the only candidate for the $\Gamma$-universal object over $Fn$:

\begin{cor}\label{Gamma universal for free}
If the $n$-th copower of $S$ exists in $\CX$, then there is a $\Gamma$-universal arrow for the free $\CT$-algebra $Fn$ over the set $n$, given by the $\CT$-homomorphism

$$\kappa_n:Fn\to J_n=\Gamma(n\cdot S_1,J_n),\,i\mapsto j_i\,(i \in n),$$
where $J_n$ is the $\CT$-subalgebra of $\CX(S,n\cdot S)$ generated by the coproduct injections $j_i:S\to n\cdot S,\,i\in n$.
\end{cor}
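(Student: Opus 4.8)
The plan is to verify directly that the morphism $\kappa_n\colon Fn\to J_n$ has the universal property asserted, rather than to re-run the coequalizer construction of Remark \ref{left adjoint explicit}(2). Since $D=Fn$ is free, the only thing that must be checked is that $\kappa_n$ is well-defined as a $\CT$-homomorphism (this is automatic from freeness, given the set map $i\mapsto j_i$) and that it is $\Gamma$-universal: for every $(X,A)\in{\rm Aff}^*_S(\CT,\CX)$ and every $\CT$-homomorphism $g\colon Fn\to A$, there is a \emph{unique} $f\colon n\cdot S_1\to (X,A)$ in ${\rm Aff}^*_S(\CT,\CX)$ with $\Gamma(f)\cdot\kappa_n=g$, i.e. with $\CX(S,f)(j_i)=g(i)$ for all $i\in n$.

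First I would exploit freeness of $Fn$: a $\CT$-homomorphism $g\colon Fn\to A$ is the same thing as a set map $n\to A\subseteq\CX(S,X)$, i.e. a family $(a_i)_{i\in n}$ with each $a_i\colon S\to X$ in $\CX$ lying in $A$; and this family determines $g$ uniquely. Next, by the universal property of the coproduct $n\cdot S$ in $\CX$, such a family corresponds bijectively to a single morphism $f:=[a_i]_{i\in n}\colon n\cdot S\to X$ satisfying $f\cdot j_i=a_i$ for all $i$. So at the level of underlying $\CX$-morphisms we already have the desired bijective correspondence $g\leftrightarrow f$, and the identity $\CX(S,f)(j_i)=f\cdot j_i=a_i=g(i)$ holds by construction; hence $\Gamma(f)\cdot\kappa_n=g$ is forced, and uniqueness of $f$ at the $\CX$-level is the coproduct universal property.

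The one genuine point to check --- and the main obstacle --- is that this $f$ is actually a morphism in ${\rm Aff}^*_S(\CT,\CX)$, i.e. that $\CX(S,f)$ maps the structure $J_n$ of $n\cdot S_1$ into $A$. Here I would use that $J_n$ is, by definition, the $\CT$-subalgebra of $\CX(S,n\cdot S)$ generated by the injections $\{j_i\mid i\in n\}$, together with the fact (from the discussion preceding Remark \ref{Diers}, or Corollary \ref{rep}) that $\CX(S,f)\colon\CX(S,n\cdot S)\to\CX(S,X)$ is a $\CT$-homomorphism. A $\CT$-homomorphism carries the subalgebra generated by a set $Y$ into the subalgebra generated by its image; since $\CX(S,f)(\{j_i\})=\{a_i\}\subseteq A$ and $A$ is a $\CT$-subalgebra, we get $\CX(S,f)(J_n)\subseteq\langle\{a_i\mid i\in n\}\rangle\subseteq A$, as required. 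Thus $f\colon n\cdot S_1\to(X,A)$ is a legitimate morphism, and since $\Gamma$ is faithful on the fibres over a fixed $\CX$-object its uniqueness is inherited from the $\CX$-level uniqueness already observed. This establishes that $\kappa_n$ is a $\Gamma$-universal arrow for $Fn$. (Implicitly one also notes $\Gamma(n\cdot S_1)=J_n$, which is exactly Remark \ref{final} applied to the $U$-final, i.e. coproduct, structure on $n\cdot S$ obtained from the copies $S_1$.)
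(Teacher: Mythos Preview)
Your proof is correct and follows essentially the same line as the paper's. The paper compresses your middle step by invoking Corollary~\ref{rep} (the representability of $V\Gamma$ by $S_1$): once $V\Gamma\cong{\rm Aff}^*_S(\CT,\CX)(S_1,-)$, the copower $n\cdot S_1$ is automatically $V\Gamma$-universal over the set $n$, hence $\Gamma$-universal over $Fn$ via $F\dashv V$; your argument simply unpacks this by using the coproduct property in $\CX$ and then checking directly that $f=[a_i]_{i\in n}$ carries $J_n$ into $A$. One small remark: your closing appeal to ``$\Gamma$ faithful on the fibres'' is superfluous---uniqueness of $f$ in ${\rm Aff}^*_S(\CT,\CX)$ is immediate from uniqueness in $\CX$, since $U$ is faithful.
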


\begin{proof}
Given $(Y,B)$ in ${\rm Aff}^*_S(\CT,\CX)$, a $\CT$-homomorphism $\varphi:Fn\to B$ is determined by a family of morphisms $b_i:S\to Y\,(i \in n)$ in $B$. By the representability of $\Gamma$, $f=[b_i]_{i\in n}:n\cdot S\to Y$ in $\CX$ gives the only morphism 
$n\cdot S_1\to(Y,B)$ in ${\rm Aff}^*_S(\CT,\CX)$
with $\Gamma f\cdot \kappa_n=\varphi.$
\end{proof}

\begin{exmp}\label{examples second}
(1) In Example \ref{examples first}(1), in the absence of any given cooperations, $q$ of Remark \ref{left adjoint explicit} may be taken as an identity map. Consequently, the left adjoint of

$$\Gamma:{\rm Aff}^*_1(\emptyset, {\bf Set})={\rm Sub}({\bf Set})\to {\rm Alg}(\emptyset)={\bf Set}$$
is trivial ($D\mapsto (D,D)$) -- a fact that, of course, is also easily seen directly. It embeds ${\bf Set}$ into ${\rm Sub}({\bf Set})$ as a full coreflective subcategory.

(2) The left adjoint of

$$\Gamma:{\rm Aff}^*_R(\emptyset, {\bf Mod}_R)\to {\rm Alg}(\emptyset)={\bf Set}$$
pertaining to Example \ref{examples first}(2) assigns to a set $D$ the free $R$-module $D\cdot R$ provided with its
standard basis as the distinguished subset and provides again a full coreflective embedding.

(3) In Example \ref{examples first}(3), $q:D\cdot R \to D$ of Remark \ref{left adjoint explicit} is simply the counit at 
$D\in {\bf Mod}_R$ of the adjunction $F \dashv V: {\bf Mod}_R\to {\bf Set}$. As a consequence, the left adjoint of

$$\Gamma:{\rm Aff}^*_R(\CT, {\bf Mod}_R)={\rm Sub}({\bf Mod}_R)\to {\rm Alg}(\CT)={\bf Mod}_R$$
maps as in (1), so that $D\mapsto (D,D)$, thus again providing the obvious full coreflective embedding.

(4) For Example \ref{examples first}(4), the quotient map $q$ of Remark \ref{left adjoint explicit} is much harder to compute than in the previous three situations. However, $q$ remains easily describable when the given $D\in {\rm Alg}(\CT)$, {\em i.e.}, the non-empty set $D$ with a binary operation and a self-inverse unary operation, is the initial  or the terminal object
in ${\rm Alg}(\CT)$, denoted here by $D_0$ and 1, respectively. Indeed, the left adjoint of $\Gamma$ must assign to $D_0$ the initial object in ${\rm Aff}^*_{{\bf S}^1}(\CT,{\bf Top}_{\bullet})$,
given by $(\bullet,D_0)$, {\em i.e.}, by the zero object of ${\bf Top}_{\bullet}$ provided with the initial $\CT$-algebra.

For $D=1$ terminal, $q:{\sf S}^1\to Q$ is the joint coequalizer of the pairs$(1_{{\sf S}^1},\nu)$, $(1_{{\sf S}^1},\tau)$ and $(1_{{\sf S}^1}, \delta)$, where $\nu$ is the constant map ${\sf S}^1 \to {\sf S}^1$ and $\delta$, when we present ${\sf S}^1$ as ${\mathbb R}/{\mathbb Z}$, 
is described by $(t+{\mathbb Z}\mapsto 2t+{\mathbb Z})$. While the $\CT$-algebra $J$ generated by $1_{{\sf S}^1}$ in ${\bf Top}_{\bullet}({\sf S}^1,{\sf S}^1)$ is the free $\CT$-algebra on one generator, $Q$ is 
terminal in ${\bf Top}_{\bullet}$ and, hence,
$\{q\}\cdot J\subseteq {\bf Top}_{\bullet}({\sf S}^1,Q)$ 
is the terminal $\CT$-algebra, making also the unit of the adjunction at 1 trivial: $1\to \Gamma (\bullet,1)=1$, as one should have expected. 

(5) For Example \ref{examples first}(5), because of the missing coequalizers in ${\bf hTop}_{\bullet}$, Theorem \ref{left adjoint} does {\em not} assure us of the right adjointness of the group-valued functor

$$\Gamma: {\rm Aff}^*_{ {\sf S}^1}(\CT, {\bf hTop}_{\bullet}) \to {\rm Alg}(\CT)={\bf Grp}.$$
However, we are still able to apply
 Corollary \ref{Gamma universal for free} and produce $\Gamma$-universal arrows for {\em free} groups on $n$  generators. In fact, since 
 
 $$\pi_1(n\cdot {\sf S}^1)\cong Fn$$
 is freely generated by the coproduct injections of $n\cdot {\sf S}^1$, 
 the natural isomorphism 
 
 $$Fn\to \Gamma (n\cdot {\sf S}^1,\pi_1(n\cdot {\sf S}^1))$$
  serves as the $\Gamma$-universal
 arrow for $Fn$.
\end{exmp}

\section{The Zariski dual closure operator, separation and completeness}
Regular epimorphisms $p:(X,A)\to (P,C)$ in the topological category ${\rm Aff}^*_S(\CT,\CX)$ over $\CX$ are described as regular epimorphisms $p:X\to P$ in $\CX$ with $C=\{p\}\cdot A$. The {\em Zariski dual closure} of $p$
--if it exists-- is the regular epimorphism $\zeta_{(X,A)} p=\zeta p$ with domain $(X,A)$ characterized by the following two properties:

1. $\forall a,b\in A\,(p\cdot a = p\cdot b \Rightarrow \zeta p\cdot a= \zeta p\cdot b)$;

2. every $\CX$-morphism $f$ with domain $X$ satisfying  $(\forall a,b\in A\,(p\cdot a = p\cdot b \Rightarrow f\cdot a= f\cdot b))$ factors through $\zeta p$.

The following proposition gives conditions for the existence of $\zeta$ and establishes it as an idempotent dual closure operator
(in the sense of \cite{dual clop}) for regular epimorphisms of ${\rm Aff}^*_S(\CT,\CX)$. Of course, the proposition follows from the known dual facts, but we find it helpful in the examples to spell these out explicitly in the current setting. For regular epimorphisms $p,p'$ in $\CX$ with the same domain we write $p\leq p'$ if $p'$ factors through $p$ (so that $p'=h\cdot p$ for some $h$). 
For $f:X\to Y$ and a regular epimorphism $q$ with domain $Y$,  $f^-(q)$ with domain $X$ is defined as --existence granted-- the regular-epi part in the (regular epi, mono)-factorization of $q\cdot f$.

\begin{prop}\label{zeta closure}
Let $\CX$ have coequalizers and (regular epi, mono)-factorizations, as well as copowers of $S$ or co-intersections (= wide pushouts) of small families of regular epimorphisms with common domain. Then every regular epimorphism in ${\rm Aff}^*_S(\CT,\CX)$ 
has a Zariski dual closure, subject to the following rules: 
\begin{itemize}
\item[{\rm (1)}] $\zeta p\le p$;
\item[{\rm (2)}] $p\le p'\Rightarrow \zeta p\le \zeta p'$;
\item[{\rm (3)}] $\zeta p\le \zeta \zeta p$;
\item[{\rm (4)}] $\zeta_{(X,A)}(f^-(q))\le f^-(\zeta_{(Y,B)}q)$,
\end{itemize}
for all morphisms $f:(X,A)\to (Y,B)$ and regular epimorphisms $p,p'$ with domain $(X,A)$ and $q$ with domain $(Y,B)$.
\end{prop}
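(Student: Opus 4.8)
The plan is to construct $\zeta p$ explicitly as a joint coequalizer and then to obtain rules~(1)--(4) by comparing the families of parallel pairs that define the various Zariski dual closures. Given a regular epimorphism $p:(X,A)\to(P,C)$ --- so $p:X\to P$ is a regular epimorphism of $\CX$ and $C=\{p\}\cdot A$ --- I would let $\CF_p$ be the (small) family of all parallel pairs $(a,b):S\rightrightarrows X$ with $a,b\in A$ and $p\cdot a=p\cdot b$, and take $e:X\to\bar X$ to be the joint coequalizer of $\CF_p$ in $\CX$. This exists under either hypothesis: given copowers of $S$ it is the coequalizer of the two morphisms $\CF_p\cdot S\rightrightarrows X$ assembled from the $a$'s, respectively the $b$'s; given co-intersections it is the co-intersection of the individual coequalizers $X\to X/(a\!\sim\!b)$; and in both cases one checks, as in the dual treatment of \cite{dual clop}, that $e$ is again a regular epimorphism. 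Putting $\bar A:=\{e\}\cdot A=\CX(S,e)(A)$, which is a $\CT$-subalgebra of $\CX(S,\bar X)$, one obtains a regular epimorphism $\zeta p:=\bigl(e:(X,A)\to(\bar X,\bar A)\bigr)$ of ${\rm Aff}^*_S(\CT,\CX)$. Property~1 is built into the construction, and for Property~2 any $f:X\to Y$ in $\CX$ coequalizing all pairs of $\CF_p$ factors through $e$ (uniquely, as $e$ is epic); hence this $e$ is the Zariski dual closure of $p$.

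Two elementary observations then do the rest. First, if $p\le p'$, say $p'=h\cdot p$, then $\CF_p\subseteq\CF_{p'}$. Second, an inclusion $\CF\subseteq\CG$ of pair-families forces the joint coequalizer of $\CG$ to coequalize $\CF$ and hence to factor through the joint coequalizer of $\CF$, so $e_{\CF}\le e_{\CG}$; moreover the connecting morphism automatically carries $\{e_{\CF}\}\cdot A$ to $\{e_{\CG}\}\cdot A$ and so is a morphism of ${\rm Aff}^*_S(\CT,\CX)$. Rule~(2) is then immediate. For rule~(1), $p$ itself coequalizes every pair of $\CF_p$, so by Property~2 it factors as $p=\bar p\cdot\zeta p$, giving $\zeta p\le p$; and $\bar p$ sends $\bar A$ to $\{\bar p\cdot e\}\cdot A=\{p\}\cdot A=C$, so it lies in ${\rm Aff}^*_S(\CT,\CX)$. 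For rule~(3), $\zeta\zeta p=\zeta(\zeta p)$ has domain $(X,A)$ with defining family $\{(a,b)\in A^2:\ e\cdot a=e\cdot b\}\supseteq\CF_p$, whence $\zeta p\le\zeta\zeta p$; combined with rule~(1) applied to $\zeta p$, this yields idempotency.

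For rule~(4) I would take $f:(X,A)\to(Y,B)$ and a regular epimorphism $q$ with domain $(Y,B)$, and consider the (regular epi, mono)-factorizations $q\cdot f=m\cdot f^-(q)$ and $(\zeta q)\cdot f=m'\cdot f^-(\zeta q)$, lifted to ${\rm Aff}^*_S(\CT,\CX)$ using that $U$ is topological. Since $m$ is monic, the family defining $\zeta_{(X,A)}(f^-(q))$ is $\{(a,b)\in A^2:\ q\cdot f\cdot a=q\cdot f\cdot b\}$, and since $m'$ is monic, $f^-(\zeta q)\cdot a=f^-(\zeta q)\cdot b$ holds precisely when $(\zeta q)\cdot f\cdot a=(\zeta q)\cdot f\cdot b$. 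For such a pair $(a,b)$ one has $f\cdot a,\,f\cdot b\in\{f\}\cdot A\subseteq B$ and $q\cdot(f\cdot a)=q\cdot(f\cdot b)$, so $(f\cdot a,\,f\cdot b)$ lies in the family defining $\zeta_{(Y,B)}q$ and is coequalized by $\zeta q$. Hence $f^-(\zeta q)$ coequalizes the family defining $\zeta_{(X,A)}(f^-(q))$ and so factors through it, which is exactly $\zeta_{(X,A)}(f^-(q))\le f^-(\zeta_{(Y,B)}q)$.

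I expect the only genuinely delicate step to be the construction itself: verifying that the joint coequalizer --- respectively the co-intersection of the coequalizers $X\to X/(a\!\sim\!b)$ --- is again a \emph{regular} epimorphism, and transporting the (regular epi, mono)-factorizations and the comparison relation $\le$ from $\CX$ to ${\rm Aff}^*_S(\CT,\CX)$. These are precisely the dual counterparts of facts in \cite{dual clop}; once they are available, rules~(1)--(4) reduce to the two observations above together with the single structural input that a morphism of ${\rm Aff}^*_S(\CT,\CX)$ maps the distinguished $\CT$-subalgebra of its domain into that of its codomain.
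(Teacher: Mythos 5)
Your proposal is correct and follows essentially the same route as the paper: your family $\CF_p$ is exactly the paper's ${\rm ker}_A(p)$, the construction of $\zeta p$ as a coequalizer of the two morphisms assembled on $({\rm ker}_A(p))\cdot S$ (or as the co-intersection of the pairwise coequalizers) is the paper's construction, and rules (1)--(4) are obtained by the same kernel-family comparisons, e.g.\ ${\rm ker}_A(f^-(q))\subseteq{\rm ker}_A(f^-(\zeta q))$ for (4). You merely spell out details the paper dismisses as routine (regularity of the joint coequalizer, compatibility of the induced comparison morphisms with the distinguished $\CT$-subalgebras), which is fine.
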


\begin{proof}
With the notation
 
$${\rm ker}_A(p):=\{(a,b)\in A\times A \mid p\cdot a=p\cdot b\},$$ 
the underlying $\CX$-morphism of $\zeta p$ may either be constructed as the
coequalizer of the induced morphisms $\alpha, \beta: ({\rm ker}_A(p))\cdot S\to X$ with $\alpha = [a]_{(a,b)\in {\rm ker}_A(p)}, \beta = [b]_{(a,b)\in {\rm ker}_A(p)}$, or as the co-intersection of the family $(e_{a,b})_{(a,b)\in {\rm ker}_A(p)}$, with $e_{a,b}$ the coequalizer of $a, b:S\to X$. 

Showing that $\zeta p$ satisfies the characteristic properties 1 and 2 is a routine diagram chase, and so are the verifications of 
(1) and (2). Rule (3) follows from the characteristic property 2 once one has noticed that ${\rm ker}_A(p)={\rm ker}_A(\zeta p)$. Similarly, for (4) one must show ${\rm ker}_A(f^-(q))\subseteq {\rm ker}_A(f^-(\zeta q))$. Indeed, if 
$f^-(q)\cdot a= f^-(q)\cdot b$, then $(f\cdot a,f\cdot b)\in {\rm ker}_B(q)={\rm ker}_B(\zeta q)$, which implies 
$(a,b)\in {\rm ker}_A(f^-(\zeta q))$.
\end{proof} 

\begin{defn}(\cite{dual clop}) For a regular epimorphism $p$ in ${\rm Aff}^*_S(\CT, \CX)$ with domain $(X,A)$, let 
$\theta p\cdot \zeta p =p$ be the factorization of $p$ through its (existing) Zariski dual closure. One then calls $p$
$\zeta$-{\em closed} if $\theta p$ is an isomorphism, and $p$ is $\zeta$-{\em sparse} if $\zeta p$ is an isomorphism.
\end{defn}

\begin{rem}\label{rem zeta-closed}
Already being a regular epimorphism when $p$ is one, $\theta p$ or $\zeta p$ will be an isomorphism as soon as it is a monomorphism (in ${\rm Aff}^*_S(\CT, \CX)$ or, equivalently, in $\CX$). Also the following statements follow immediately from the definitions or the preceding statements:

(1) $p$ is $\zeta$-closed if, and only if, in the notation of the proof of Proposition \ref{zeta closure}, every $f:X \to Y$ in $\CX$ with 
${\rm ker}_A(p)\subseteq {\ker}_A(f)$ factors through $p$. Note that
every epimorphism $p$ satisfying this characteristic property  of $\zeta$-closedness must automatically be regular (in the sense of \cite{Kelly}).

(2) $p$ is $\zeta$-sparse if, and only if, ${\rm ker}_A(p) \subseteq \Delta_A$, with $\Delta_A$ the identity relation on the set $A$; if $p$ is also $\zeta$-closed, it must be an isomorphism.

(3) $\zeta p$ is $\zeta$-closed, and $\theta p$ is $\zeta$-sparse, for every $p$.
\end{rem}

\begin{exmp}\label{examples third} We refer to Example \ref{examples first}.

(1) For $p:(X,A)\to (Y,B)$ in ${\rm Aff}_1(\emptyset,{\bf Set})$ with $p$ surjective, $\zeta p$ is given by the map
$X\to p(A)+(X\setminus A)$ that maps elements in $A$ like $p$ does but maps elements in $X\setminus A$ identically. Consequently, $p$ is $\zeta$-closed precisely when $p\,|_{X\!\setminus \!A}$ is injective, and $\zeta$-sparse when $p|_A$ is injective.

(2) Keeping the same notation, let $p$ now be in ${\rm Aff}_R(\emptyset,{\bf Mod}_R)$. Then $\zeta p$ is described by the projection $X\to X/{\hat{A}}$, with $\hat{A}$ the submodule generated by $\{a-b \mid a,b\in A, p(a)=p(b)\}$.
In this description $p$ is $\zeta$-closed precisely when ${\rm ker}p \subseteq \hat{A}$, and $\zeta$-sparse when
$\hat{A}=0$. 

(3) For $p$ in ${\rm Aff}_R(\CT, {\bf Mod}_R)$ with $\CT$ the theory of $R$-modules, $\zeta p$ is given by the projection $X\to X/({\rm ker}p\cap A)$. Now $p$ is $\zeta$-closed ($\zeta$-sparse) if, and only if, ${\rm ker}p \subseteq A$
(${\rm ker}p \cap A=0$, respectively).

\end{exmp}

Next we will apply the $\zeta$-closure to the counit of the representable functor 
$V\Gamma \cong {\rm Aff}^*_S(\CT,\CX)$ (see Corollary \ref{rep}) at a dually $\CT$-affine space $(X,A)$, and for that it will be useful to examine first the role of $S_1=(S,<1_S>)$ beyond Lemma \ref{S as object}.

\begin{prop}\label{finally dense}
For every object $(X,A)$, the family of morphisms $a:S_1\to (X,A)\, (a\in A)$ is final with respect to the topological functor 
$U:{\rm Aff}^*_S(\CT,\CX) \to \CX$. Consequently, $S_1$ is $U$-finally dense in ${\rm Aff}^*_S(\CT,\CX)$.
\end{prop}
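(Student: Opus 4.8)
The plan is to unwind what $U$-finality means for the family $(a: S_1 \to (X,A))_{a \in A}$ and verify it directly using the description of $U$-initial structures from Proposition~\ref{topological}. Recall that a sink $(a: U(S_1) \to X)_{a\in A}$ in $\CX$ is $U$-final precisely when the structure $A$ on $X$ is the largest $\CT$-subalgebra $A'$ of $\CX(S,X)$ for which every $a \in A$ still defines a morphism $(S,{<}1_S{>}) \to (X,A')$; equivalently, by the standard characterization of topological functors, when for every $\CX$-object $Z$ with structure $C$ (i.e. every $(Z,C)\in {\rm Aff}^*_S(\CT,\CX)$) and every $\CX$-morphism $h: X \to Z$, if each composite $h\cdot a: S_1 \to (Z,C)$ is a morphism in ${\rm Aff}^*_S(\CT,\CX)$, then $h: (X,A) \to (Z,C)$ is a morphism.

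So the key step is: assume $h: X\to Z$ in $\CX$ is such that $h\cdot a \in C$ for every $a \in A$ (using Lemma~\ref{S as object} to rephrase ``$h\cdot a: S_1 \to (Z,C)$ is a morphism'' as ``$h\cdot a\in C$''). We must show $\{h\}\cdot A \subseteq C$, i.e. $\CX(S,h)(A) \subseteq C$. But $\CX(S,h)(A) = \{h\cdot a \mid a\in A\}$, and by hypothesis every such element lies in $C$. That is literally the statement, so there is essentially nothing to prove beyond correctly invoking Lemma~\ref{S as object} and the definition of morphisms in ${\rm Aff}^*_S(\CT,\CX)$. I would also remark that, conversely, each $a: S_1 \to (X,A)$ really is a morphism by Lemma~\ref{S as object} since $a\in A$, so the family is a genuine sink in the category.

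For the ``consequently'' clause, $U$-final density of the (small, single-object up to the structure) family $\{S_1\}$ is by definition the assertion that every object $(X,A)$ is the $U$-final lift of some sink out of copies of $S_1$; the sink $(a:S_1\to(X,A))_{a\in A}$ just exhibited does exactly this, so the conclusion is immediate. I would phrase this as: since the above shows each $(X,A)$ carries the $U$-final structure determined by the sink $(a)_{a\in A}$ of morphisms from $S_1$, the single object $S_1$ is $U$-finally dense in ${\rm Aff}^*_S(\CT,\CX)$ in the sense of \cite{AHS}.

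There is no real obstacle here — the statement is a direct consequence of how initial/final structures were computed in Proposition~\ref{topological} and Remark~\ref{final} together with Lemma~\ref{S as object}. The only thing to be slightly careful about is the exact form of the definition of $U$-finality for a sink (as opposed to final density of a subcategory), and making sure the rephrasing ``$h\cdot a: S_1 \to (Z,C)$ is a morphism $\iff h\cdot a \in C$'' is correctly attributed to Lemma~\ref{S as object}; once that bookkeeping is in place, the proof is a one-line verification. I would keep the write-up to a couple of sentences.
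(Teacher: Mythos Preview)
Your proof is correct and amounts to the same observation as the paper's, just phrased via the universal property of $U$-finality rather than via the explicit formula of Remark~\ref{final}: the paper simply notes that $a\in\{a\}\cdot\langle 1_S\rangle\le A$ for every $a\in A$, so the $U$-final structure $\langle\,\bigcup_{a\in A}\{a\}\cdot\langle 1_S\rangle\,\rangle$ equals $A$. Both arguments reduce immediately to Lemma~\ref{S as object}, and your suggested two-sentence write-up is entirely appropriate.
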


\begin{proof} 
For all $a\in A$ one has $a\in \{a\}\cdot <1_S>\le A$. The $\CT$-algebra $A$ is therefore generated by $\bigcup_{a\in A}\{a\}\cdot <1_S>$, which is the $U$-final structure.
\end{proof}

\begin{cor}\label{counit final}
Existence of the needed copowers granted, for every object $(X,A)$ the morphism 

$$\varepsilon_{(X,A)}=\varepsilon: A\cdot S_1\to (X,A)\, {\rm with}\,\, \varepsilon \cdot j_a=a\,\,(a\in A)$$
($j_a:S\to A\cdot S$ denoting a coproduct injection) is $U$-final.
\end{cor}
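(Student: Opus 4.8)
The plan is to deduce this corollary directly from Proposition~\ref{finally dense} together with the way colimits (in particular copowers) are lifted along the topological functor $U$, as described in Corollary~\ref{limits} and Remark~\ref{final}. First I would recall that $A\cdot S_1$ denotes the $A$-indexed copower of $S_1$ in ${\rm Aff}^*_S(\CT,\CX)$, which by Corollary~\ref{limits} exists once the copower $A\cdot S$ exists in $\CX$, has underlying $\CX$-object $U(A\cdot S_1)=A\cdot S$, and carries the $U$-final structure determined by the coproduct injections $j_a:S_1\to A\cdot S_1$. The morphism $\varepsilon$ is then simply the unique morphism out of the copower whose composite with $j_a$ is $a:S_1\to(X,A)$; its underlying $\CX$-morphism is $[a]_{a\in A}:A\cdot S\to X$, which is well defined precisely because each $a$ lies in $A$ (Lemma~\ref{S as object}).

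The key step is the observation that $U$-finality is transitive along composable final families. Concretely: by Proposition~\ref{finally dense} the family $(a:S_1\to(X,A))_{a\in A}$ is $U$-final, and by construction of the copower (Remark~\ref{final}) the family of coproduct injections $(j_a:S_1\to A\cdot S_1)_{a\in A}$ is $U$-final. Since $a=\varepsilon\cdot j_a$ for every $a\in A$, the final family through which $(X,A)$ receives its structure factors through the single morphism $\varepsilon$ after the already-final family $(j_a)$. A standard property of topological functors — the cancellation/composition law for final sinks, as found in~\cite{AHS} — then forces $\varepsilon$ itself to be $U$-final: if a sink $(g_a)=(\varepsilon\cdot j_a)$ is $U$-final and $(j_a)$ is $U$-final, then $\varepsilon$ is $U$-final. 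I would spell this out at the level of structures: the structure on $X$ is generated by $\bigcup_{a\in A}\{a\}\cdot<\!1_S\!>=\bigcup_{a\in A}\{\varepsilon\}\cdot\{j_a\}\cdot<\!1_S\!>=\{\varepsilon\}\cdot\big(\bigcup_{a\in A}\{j_a\}\cdot<\!1_S\!>\big)$, and the inner union generates exactly the structure of $A\cdot S_1$ (again by Remark~\ref{final}); hence the structure of $(X,A)$ is the $\CT$-subalgebra generated by $\{\varepsilon\}\cdot\Gamma(A\cdot S_1)$, which is the $U$-final structure along $\varepsilon$.

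The main obstacle — really the only subtle point — is bookkeeping around the generation of $\CT$-subalgebras: one must check that pushing a generating set forward along a morphism and then generating coincides with first generating and then pushing forward, i.e.\ that $\{\varepsilon\}\cdot\langle\,\bigcup_a\{j_a\}\cdot<\!1_S\!>\,\rangle = \langle\,\{\varepsilon\}\cdot\bigcup_a\{j_a\}\cdot<\!1_S\!>\,\rangle$. This holds because $\CX(S,\varepsilon)$ is a $\CT$-homomorphism, so it commutes with the subalgebra-generation closure operator; this is the same mechanism already used implicitly in Remark~\ref{final}. Once that identification is in place, the equality of the structure on $X$ with the $U$-final structure along the single morphism $\varepsilon$ is immediate, and since $U$ is topological (Proposition~\ref{topological}) this is exactly the assertion that $\varepsilon$ is $U$-final.
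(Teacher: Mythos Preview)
Your proposal is correct and follows precisely the route the paper intends: the corollary is stated without proof, immediately after Proposition~\ref{finally dense}, and is meant to be read as the passage from the $U$-final family $(a:S_1\to(X,A))_{a\in A}$ to the single morphism $\varepsilon$ out of the copower, using the $U$-finality of the coproduct injections $(j_a)$ and the factorization $a=\varepsilon\cdot j_a$. Your structure-level computation, together with the observation that the $\CT$-homomorphism $\CX(S,\varepsilon)$ commutes with subalgebra generation (so that $\{\varepsilon\}\cdot\langle j_a\mid a\in A\rangle=\langle \varepsilon\cdot j_a\mid a\in A\rangle=A$), is exactly the verification one would supply.
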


\begin{rem}
{\em The family $a:S_1\to (X,A)\,(a\in A)$ is $U$-initial for every object $(X,A)$ if, and only if, $<1_S>=\CX(S,S)$, {\rm i.e.}, if $S_1$ carries the largest possible $\CT$-algebra as its structure}. Indeed, as the $U$-initial structure is given by
$\bigcap_{a\in A}\CX(S,a)^{-1}(A)\ge <1_S>$, 
the condition $<1_S>=\CX(S,S)$ is certainly sufficient for the $U$-initiality of $a:S_1\to (X,A)\,(a\in A)$. Considering $(X,A)=(S,\CX(S,S))$ one sees that it is also necessary. 
\end{rem}

\begin{defn}\label{sep and compl}
Let $S$ have all copowers in $\CX$. A dually $\CT$-affine algebra $(X,A)$ modelled by $S$ is called
\begin{itemize}
\item {\em separating} if any two morphisms $g,h:(X,A)\to (Y,B)$ must be equal whenever $g\cdot a=h\cdot a$ for all
 $a\in A$; equivalently, if $\varepsilon_{(X,A)}$ is epic in ${\rm Aff}^*_S(\CT,\CX)$ or, equivalently,  in $\CX$;
 \item {\em regularly separating} if $\varepsilon_{(X,A)}$ is a regular epimorphism in ${\rm Aff}^*_S(\CT,\CX)$ or, equivalently,  in $\CX$; 
 \item {\em $\zeta$-complete} if $\varepsilon_{(X,A)}$ is a $\zeta$-closed regular epimorphism; that is (see Remark \ref{rem zeta-closed}): if $(X,A)$ is separating, and if every $f:A\cdot S\to Y$ in $\CX$ with 
 
 $$\forall s,t \in J_A:=<j_a \mid a\in A>\le \CX(S,A\cdot S)\,\,(\varepsilon \cdot s=\varepsilon \cdot t\Rightarrow f\cdot s=f\cdot t)$$
 
factors through $\varepsilon =\varepsilon_{(X,A)}$. (In what follows, we will keep the notation $J_A$ for the $\CT$-subalgebra generated by the coproduct injections $j_a (a\in A)$).
 
 \end{itemize}
\end{defn}

\begin{rem}\label{trivial}
(1) The full subcategory of separating objects in ${\rm Aff}^*_S(\CT,\CX)$ is easily seen to be closed under
epi-sinks, in particular closed under colimits, and therefore coreflective in ${\rm Aff}^*_S(\CT,\CX)$ under mild hypotheses on $\CX$. Indeed, for a jointly epic family $f_i:(X_i,A_i)\to (Y,B)$, when the family 
$a:S_1\to (X,A)\,(a\in A)$,
is epic, so is $f_i\cdot a \,(a\in A, i\in I)$, which is subfamily of $b:S_1\to (Y,B)\,(b\in B)$.

Similarly, regularly separating objects can be seen to be closed under regular epi-sinks, under mild hypotheses on 
$\CX$.

(2) While there is no comparable easy stability property for $\zeta$-completeness as there is for separating objects (but see the characterization via projectivity in Proposition \ref{projective}(4) below!), we should point out that the notion of $\zeta$-completeness becomes rather simple when $\CT=\emptyset$ (or the initial theory). Indeed, in this case one has $J_A = \{j_a \mid a\in A\}$ and therefore trivially
${\rm ker}_{J_A}(\varepsilon_{(X,A)})\subseteq \Delta_{J_A}$ for all objects $(X,A)$; consequently, for $(X,A)$  regularly separating, the morphism $\varepsilon_{(X,A)}$ is in fact $\zeta$-sparse, hence an isomorphism if requested to be also 
$\zeta$-closed. Briefly: {\em $(X,A)$ is $\zeta$-complete if, and only if, $\varepsilon_{(X,A)}$ is an isomorphism in 
${\rm Aff}^*_S(\emptyset,\CX)$  or, equivalently, in $\CX$}.
\end{rem}

Here is a characterization of (regularly) separating and of $\zeta$-complete objects that utilizes the role of $S_1$ in ${\rm Aff}^*_S(\CT,\CX)$ that is known in the dual situation when $\CX={\bf Set}^{\rm op}$ -- see, for example,  \cite{Giuli Hofmann} --, which is why we can keep its proof rather short.

\begin{prop}\label{projective}
{\rm (1)} $S_1$ is projective in ${\rm Aff}^*_S(\CT,\CX)$ with respect to the class of $U$-final morphisms and, in particular, the class of regular epimorphisms, and so are all of its copowers.

{\rm (2}) An object $(X,A)$ is separating if, and only if, every $U$-final morphism $h$ in  ${\rm Aff}^*_S(\CT,\CX)$
with codomain $(X,A)$ is an epimorphism.

{\rm (3)} If $\CX$ has (regular epi, mono)-factorizations, then $(X,A)$ is regular separating precisely when every $U$-final morphism $h$ in  ${\rm Aff}^*_S(\CT,\CX)$
with codomain $(X,A)$ is a regular epimorphism.

{\rm (4)} Existence of the needed $\zeta$-closures granted, a separating object $(X,A)$ is $\zeta$-complete if,
and only if, it is projective with respect to $\zeta$-sparse regular eopimorphisms.
\end{prop}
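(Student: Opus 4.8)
The plan is to prove (1) first and then derive (2) and (3) from it by factoring the counit $\varepsilon_{(X,A)}$ through an arbitrary $U$-final morphism. For (1), let $h\colon (Y,B)\to(X,A)$ be $U$-final, so that $A=\CX(S,h)(B)$ (Remark \ref{final}), and let $g\colon S_1\to(X,A)$ be any morphism. By Lemma \ref{S as object}, $g$ is just an element $g\in A$, so finality gives $b\in B$ with $h\cdot b=g$, and again by Lemma \ref{S as object} the element $b\colon S_1\to(Y,B)$ is a morphism, hence the required lift. Since regular epimorphisms of ${\rm Aff}^*_S(\CT,\CX)$ are $U$-final (by the description opening Section 3), this covers projectivity against them. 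For a copower $n\cdot S_1$, a morphism into $(X,A)$ is, by the universal property of the coproduct, a family $(g_i\colon S_1\to(X,A))_{i\in n}$; lifting each $g_i$ as above and copairing produces a lift of the whole family, so $n\cdot S_1$ is projective too.

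For (2) and (3) the key observation is that \emph{every} $U$-final morphism $h\colon(Y,B)\to(X,A)$ factors the counit: applying (1) to the projective copower $A\cdot S_1$ and to $\varepsilon=\varepsilon_{(X,A)}$ yields $\psi$ with $h\cdot\psi=\varepsilon$ (equivalently, lift each $a\in A$ through $h$ and copair). In (2), one implication is immediate because $\varepsilon$ is itself $U$-final (Corollary \ref{counit final}), so if all $U$-final morphisms into $(X,A)$ are epic then so is $\varepsilon$, i.e. $(X,A)$ is separating; conversely, from $h\cdot\psi=\varepsilon$ epic we get $h$ epic. For (3) the same factorization is used: if $\varepsilon$ is regular epic then its underlying morphism is regular, and from $h\cdot\psi=\varepsilon$ the underlying $h$ is epic and, since $\varepsilon$ is extremal, any factorization $h=m\cdot k$ with $m$ monic forces $m$ invertible; thus $h$ is extremal in $\CX$, hence regular by the standing (regular epi, mono)-factorizations, and being $U$-final it is a regular epimorphism of ${\rm Aff}^*_S(\CT,\CX)$. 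The reverse implication again holds because $\varepsilon$ is $U$-final.

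For (4), I would treat the two implications separately. Suppose first $(X,A)$ is $\zeta$-complete, and let $p\colon(Z,C)\to(W,D)$ be $\zeta$-sparse regular epic and $g\colon(X,A)\to(W,D)$ a morphism. Since $p$ is $U$-final, each $g\cdot a\in D$ lifts through $p$, and since ${\rm ker}_C(p)\subseteq\Delta_C$ (Remark \ref{rem zeta-closed}(2)) the lift $c_a\in C$ is unique; copairing the $c_a$ gives $\phi\colon A\cdot S_1\to(Z,C)$ with $p\cdot\phi=g\cdot\varepsilon$ (and $\{\phi\}\cdot J_A\subseteq C$ as $\CX(S,\phi)$ is a $\CT$-homomorphism sending generators into $C$). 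Then ${\rm ker}_{J_A}(\varepsilon)\subseteq{\rm ker}_{J_A}(\phi)$, for if $\varepsilon\cdot s=\varepsilon\cdot t$ with $s,t\in J_A$ then $p\cdot(\phi\cdot s)=p\cdot(\phi\cdot t)$ with $\phi\cdot s,\phi\cdot t\in C$, whence $\phi\cdot s=\phi\cdot t$ by $\zeta$-sparseness. As $\varepsilon$ is $\zeta$-closed (Remark \ref{rem zeta-closed}(1)), $\phi$ factors as $\tilde g\cdot\varepsilon$ in $\CX$; then $\tilde g$ is a morphism $(X,A)\to(Z,C)$ (since $\tilde g\cdot a=c_a\in C$) and $p\cdot\tilde g=g$, because $p\cdot\tilde g\cdot\varepsilon=g\cdot\varepsilon$ and $\varepsilon$ is epic. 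Hence $(X,A)$ is projective against $\zeta$-sparse regular epimorphisms.

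Conversely, suppose $(X,A)$ is separating and projective against $\zeta$-sparse regular epimorphisms. By Remark \ref{rem zeta-closed}(1) it suffices to verify the lifting property characterizing $\zeta$-closedness of the epic $\varepsilon$, for then $\varepsilon$ becomes automatically a $\zeta$-closed regular epimorphism. So let $f\colon A\cdot S\to Y$ satisfy ${\rm ker}_{J_A}(\varepsilon)\subseteq{\rm ker}_{J_A}(f)$. I would form the coequalizer $c\colon A\cdot S_1\to(\bar X,\bar A)$ of the pairs in ${\rm ker}_{J_A}(\varepsilon)$ as in the proof of Proposition \ref{zeta closure}; it is regular epic with ${\rm ker}_{J_A}(c)={\rm ker}_{J_A}(\varepsilon)$, and by its universal property both $\varepsilon$ and $f$ factor through it, say $\varepsilon=\pi\cdot c$ and $f=\hat f\cdot c$. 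The comparison $\pi\colon(\bar X,\bar A)\to(X,A)$ is $U$-final, epic, and has trivial kernel on $\bar A$. Once $\pi$ is known to be a $\zeta$-sparse regular epimorphism, projectivity lifts $\id_{(X,A)}$ through $\pi$ to a section $s$ with $\pi\cdot s=\id$; the crucial point is then that $s\cdot\pi$ fixes $\bar A$ pointwise (for $\bar a\in\bar A$ one has $\pi\cdot(s\cdot\pi\cdot\bar a)=\pi\cdot\bar a$ with both sides in $\bar A$, so $s\cdot\pi\cdot\bar a=\bar a$ by the trivial kernel), so that $s\cdot\pi\cdot c$ and $c$ agree on every coproduct injection $j_a$ and therefore $s\cdot\pi\cdot c=c$. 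Setting $\tilde f:=\hat f\cdot s$ then gives $\tilde f\cdot\varepsilon=\hat f\cdot s\cdot\pi\cdot c=\hat f\cdot c=f$, the desired factorization. The main obstacle is precisely the regularity of $\pi$: projectivity may be invoked only against a genuine $\zeta$-sparse \emph{regular} epimorphism, whereas a separating object need not be regularly separating, so $\varepsilon$ and hence $\pi$ are a priori only (extremal) epic with trivial structure-kernel. Reconciling this—arguing via the (regular epi, mono)-factorization and the standing hypotheses that a separating projective object is in fact regularly separating, so that $\pi$ is regular—is the delicate step, after which the section argument above closes the proof.
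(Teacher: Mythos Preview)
Your treatment of (1)--(3) and of the forward direction of (4) is correct and follows the paper's argument; for (4) forward the paper compresses your explicit kernel computation into a one-line appeal to the ``standard diagonalization property'' between $\zeta$-closed and $\zeta$-sparse morphisms.

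For the converse of (4) the paper takes a shorter route than yours. Rather than testing the factorization property of $\varepsilon$ against an arbitrary $f$, it isolates the lemma that \emph{any $\zeta$-sparse regular epimorphism $q:(Q,D)\to(X,A)$ with separating domain must be an isomorphism}---which your section argument in fact establishes, since $s\cdot\pi\cdot c=c$ with $c$ epic already forces $s\cdot\pi=1$---and then applies it directly to $q=\theta\varepsilon_{(X,A)}$, invoking Remark~\ref{rem zeta-closed}(3) for $\zeta$-sparseness and Remark~\ref{trivial}(1) for the domain being separating (as a quotient of $A\cdot S_1$). The conclusion $\theta\varepsilon$ invertible gives $\varepsilon\cong\zeta\varepsilon$, hence $\varepsilon$ is $\zeta$-closed and regular. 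Your $c$ and $\pi$ are exactly $\zeta\varepsilon$ and $\theta\varepsilon$, so the substance is the same; the paper just avoids the detour through an arbitrary test morphism $f$.

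The obstacle you flag---that $\pi=\theta\varepsilon$ is only known to be a regular epimorphism once $\varepsilon$ itself is---is genuine, and the paper does not argue it away either: it simply writes $\theta\varepsilon_{(X,A)}$ under the standing clause ``existence of the needed $\zeta$-closures granted'', in effect reading that hypothesis as placing $\varepsilon$ in the domain of $\zeta$. Your attempted bridge (separating plus projective implies regularly separating via (regular epi, mono)-factorizations) is not supplied with a proof and does not obviously go through in general. So on this one point neither argument is fully closed; the difference is only that the paper absorbs the issue into its hypothesis rather than presenting it as a step to be justified.
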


\begin{proof}
(1) Given $f:(X,A)\to (Y,B)$ $U$-final and $g:S_1\to (Y,B)$, one has $g\cdot 1_S\in B=\{f\}\cdot A$, so that 
$g$
factors as $g=f\cdot a$ with $a\in A$, {\em i.e.}, with $a$ a morphism $S_1\to (X,A)$ by Lemma \ref{S as object}.
Furthermore, projectivity is a property stable under taking coproducts.

(2), (3) The condition is necessary since, given $h:(Z,C)\to (X,A)$ $U$-final, the projectivity assertion of (1) makes $\varepsilon: A\cdot S \to X$ factor through $h$. Consequently, $h$ must be epic when $\varepsilon$ is, and the same conclusion can be drawn in the ``regular case", provided that the class of regular epimorphisms in $\CX$ is right cancellable -- which is certainly guaranteed in the presence of (regular epi, mono)-factorizations. Conversely, one simply exploits the given property for $h=\varepsilon_{(X,A)}$, which is $U$-final by Corollary \ref{counit final}.

(4) To show the necessity of the condition, consider morphisms $p:(Y,B)\to (Z,C), f:(X,A)\to (Z,C)$ with $p$ $\zeta$-sparse. $U$-finality of $p$ makes all $f\cdot a, \,a\in A$, factor through $p$, whence also $f\cdot \varepsilon_{(X,A)}$ 
factors through $p$. With $p$ being $\zeta$-sparse and $\varepsilon_{(X,A)}$
$\zeta$-closed, this implies that $f$ factors through $p$, by the standard ``diagonalization property".

Conversely, assuming $(X,A)$ to be projective as indicated, first observe that any $\zeta$-sparse regular epimorphism $q:(Q,D)\to (X,A)$ with $(Q,D)$ separating must be an isomorphism. This easily shown fact may
then be applied to $q=\theta \varepsilon_{(X,A)}$ whose domain, as a quotient of $A\cdot S$, is indeed separating, as
we confirm in the proof of the theorem that follows.
\end{proof}

\begin{thm}\label{S1 complete}
Let $\CX$ have all copowers of the distinguished object $S$. Then $S_1$ is a regular-projective (regular) generator of the full subcategory of (regularly) separating objects in ${\rm Aff}^*_S(\CT,\CX)$. More importantly, $S_1$ and all of its copowers are $\zeta$-complete.
\end{thm}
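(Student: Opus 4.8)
The plan is to prove the three assertions in sequence, using the framework established in Propositions \ref{finally dense}, \ref{projective} and Corollary \ref{counit final}. First I would treat the claim that $S_1$ is a regular (regular-projective) generator of the subcategory of (regularly) separating objects. By Corollary \ref{counit final}, for every object $(X,A)$ the canonical morphism $\varepsilon_{(X,A)}:A\cdot S_1\to (X,A)$ is $U$-final, and by Proposition \ref{projective}(1) each copower of $S_1$ is projective with respect to $U$-final morphisms, in particular with respect to regular epimorphisms; so $S_1$ is regular-projective in the ambient category and a fortiori in any full subcategory containing it. For the generator property: when $(X,A)$ is separating, $\varepsilon_{(X,A)}$ is by definition an epimorphism, and when $(X,A)$ is regularly separating it is a regular epimorphism; since $A\cdot S_1$ is a copower of $S_1$, this exhibits $S_1$ as a (regular) generator of the relevant subcategory. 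Note also that $S_1$ itself is (regularly) separating, since $\varepsilon_{S_1}$ may be taken to be an isomorphism.

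Next I would show that $S_1$ and its copowers are $\zeta$-complete, which is the substantive part. Fix a set $n$ and consider the copower $n\cdot S_1 = (n\cdot S, J_n)$, where $J_n \le \CX(S,n\cdot S)$ is generated by the coproduct injections $j_i:S\to n\cdot S$ ($i\in n$), as in Corollary \ref{Gamma universal for free}. I must first verify that $n\cdot S_1$ is (regularly) separating: its structure map $\varepsilon_{n\cdot S_1}: J_n\cdot S_1 \to n\cdot S_1$ has a section because $J_n$ contains each $j_i$, so the composite $n\cdot S_1 \to J_n\cdot S_1 \xrightarrow{\varepsilon} n\cdot S_1$ built from the injections $j_i\mapsto (j_i\in J_n)$ is the identity; hence $\varepsilon_{n\cdot S_1}$ is a split (in particular regular) epimorphism, so $n\cdot S_1$ is regularly separating. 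By Proposition \ref{projective}(4) it now suffices to show that $n\cdot S_1$ is projective with respect to $\zeta$-sparse regular epimorphisms. So let $p:(Y,B)\to (Z,C)$ be $\zeta$-sparse and let $f:n\cdot S_1\to (Z,C)$ be arbitrary. Since $p$ is a regular epimorphism, by Proposition \ref{projective}(1) each $f\cdot j_i: S_1\to (Z,C)$ lifts along $p$ to some $b_i:S_1\to (Y,B)$, i.e.\ $b_i\in B$ with $p\cdot b_i = f\cdot j_i$. By the representability of $\Gamma$ (Corollary \ref{Gamma universal for free}), the family $(b_i)_{i\in n}$ yields a unique $\CX$-morphism $g = [b_i]_{i\in n}: n\cdot S\to Y$, and because each $b_i\in B$ and $J_n$ is generated by the $j_i$, $g$ is in fact a morphism $n\cdot S_1\to (Y,B)$ with $g\cdot j_i = b_i$; then $p\cdot g$ and $f$ agree on every $j_i$, hence $\Gamma(p\cdot g)$ and $\Gamma f$ agree on the generators of $J_n$, so $p\cdot g = f$ by the uniqueness clause of Corollary \ref{Gamma universal for free}. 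Thus $n\cdot S_1$ is projective with respect to all regular epimorphisms, in particular the $\zeta$-sparse ones, and Proposition \ref{projective}(4) gives $\zeta$-completeness.

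The main obstacle I anticipate is the careful bookkeeping needed to pass between the $\CX$-level data (the morphisms $b_i$, the copairing $[b_i]$) and the $\CT$-algebra level (membership in $B$, generation of $J_n$, the uniqueness of factorizations through $\kappa_n$), and in particular making sure that the lifted copairing $g$ genuinely lands in $\Aff^*_S(\CT,\CX)$ as a morphism with the correct structure — this is exactly where one uses that $J_n$ is \emph{generated} by the injections rather than merely containing them. A secondary point to get right is the verification that $\theta\varepsilon_{(X,A)}$ has separating domain, invoked at the end of the proof of Proposition \ref{projective}(4): the domain of $\theta\varepsilon_{(X,A)}$ is the $\zeta$-closure object of $\varepsilon_{(X,A)}$, which is a quotient of $A\cdot S_1$ and hence receives a $U$-final (in fact regular-epi) morphism from the separating object $A\cdot S_1$, so by Remark \ref{trivial}(1) it is separating; this closes the loop but should be stated explicitly. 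Everything else is the routine diagram-chasing already signalled in the cited propositions.
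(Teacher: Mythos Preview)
Your treatment of the generator claim is fine and essentially matches the paper's. For $\zeta$-completeness, however, your route through Proposition~\ref{projective}(4) differs from the paper's and carries a hidden extra hypothesis. Proposition~\ref{projective}(4) is stated under the proviso ``existence of the needed $\zeta$-closures granted'', and its converse direction (projective $\Rightarrow$ $\zeta$-complete) genuinely uses $\theta\varepsilon_{(X,A)}$, that is, the existence of the $\zeta$-closure of $\varepsilon_{(X,A)}$. By Proposition~\ref{zeta closure} that closure is known to exist only when $\CX$ has coequalizers and (regular epi, mono)-factorizations, whereas the present theorem assumes merely that $\CX$ has copowers of $S$. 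So your argument, while correct under those stronger hypotheses, does not establish the theorem as stated; in particular it would not apply to the motivating case $\CX={\bf hTop}_{\bullet}$, which lacks coequalizers.

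The paper avoids this by exploiting the section $d:n\cdot S\to A\cdot S$ (which you also construct) more fully. Rather than using $d$ only to see that $\varepsilon$ splits, one checks that $d$ is actually a morphism $(X,A)\to A\cdot S_1$ in ${\rm Aff}^*_S(\CT,\CX)$, since $d\cdot h_i=j_{h_i}\in J_A$ forces $d\cdot a\in J_A$ for every $a\in A$. Then for any $f:A\cdot S\to Y$ in $\CX$ with ${\rm ker}_{J_A}(\varepsilon)\subseteq{\rm ker}_{J_A}(f)$, the pair $(d\cdot\varepsilon\cdot j_a,\,j_a)$ lies in ${\rm ker}_{J_A}(\varepsilon)$ for every $a\in A$ (both members are in $J_A$ and $\varepsilon\cdot d\cdot\varepsilon\cdot j_a=a=\varepsilon\cdot j_a$), whence $f\cdot d\cdot\varepsilon\cdot j_a=f\cdot j_a$ and so $f=(f\cdot d)\cdot\varepsilon$. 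This verifies $\zeta$-closedness of $\varepsilon$ directly via the characterization in Remark~\ref{rem zeta-closed}(1), using nothing beyond copowers of $S$. Your lifting argument, incidentally, simply re-derives the regular-projectivity of copowers already contained in Proposition~\ref{projective}(1); it is correct but redundant once one has the direct factorization through $d$.
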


\begin{proof}
For a set $n$ let $(X,A)=n\cdot S_1$ be the $n$-th copower of $S_1$ in ${\rm Aff}^*_S(\CT,\CX)$, hence $X=n\cdot S$ with
coproduct injections $h_i:S\to n\cdot S, \,i\in n$, which generate the $\CT$-subalgebra $A=<h_i\mid i\in n>$ of 
$\CX(S,X)$. The morphism $\varepsilon =\varepsilon_{(X,A)}: A\cdot S\to X$ with $\varepsilon \cdot j_a, \, j_a$ the coproduct injections of $A\cdot S \,(a\in A)$, is certainly a regular epimorphism since it actually splits in $\CX$. Indeed, the splitting is provided by the morphism $d: n\cdot S\to A\cdot S$ with $d\cdot h_i = j_{h_i},\, i\in n$. The
first claim of the Theorem now follows from the case $n=1$ in conjunction with Proposition \ref{projective}.

Next we note that $d\cdot a$ lies in $J_A=<j_a \mid a\in A>$ for all $a\in A$ since $d\cdot h_i=j_{h_i}$ does, for all $i\in n$ (so that $d: (X,A)\to A\cdot S_1$ is actually an
${\rm Aff}^*_S(\CT,\CX)$-morphism). 
To show that $\varepsilon$ is $\zeta$-closed we consider any $f:A\cdot S \to  Y$ in $\CX$ with 
${\rm ker}_{J_A}(\varepsilon)\subseteq {\rm ker}_{J_A}(f)$. From

$$\varepsilon\cdot (d \cdot \varepsilon \cdot j_a)= \varepsilon\cdot d\cdot a=a=\varepsilon\cdot j_a$$
we then obtain 
$(f\cdot d \cdot \varepsilon) \cdot j_a=f\cdot j_a$
for all $a\in A$ and, hence, $(f\cdot d)\cdot \varepsilon=f$, so that $f$ factors through $\varepsilon$. Consequently, $\varepsilon$ is $\zeta$-closed, and $n\cdot S_1$ is $\zeta$-complete.
\end{proof}

\begin{exmp}\label{examples fourth}
We refer to Example \ref{examples first}.

(1) $(X,A)$ in ${\rm Sub}({\bf Set})$ is (regularly) separating if, and only if $A=X$, and is then already $\zeta$-complete.

(2) $(X,A)$ in ${\rm Aff}^*_R(\emptyset, {\bf Mod}_R)$ is (regularly) separating if, and only if, $A$ generates $X$ as an $R$-module, and $(X,A)$ is $\zeta$-complete if, and only if $A$ is a basis of the $R$-module $X$. Hence, to be 
the under lying $R$-module of a $\zeta$-complete object it is necessary and sufficient to be free.

(3) For $(X,A)$ in ${\rm Sub}({\bf Mod}_R)$, in the notation of Definition \ref{sep and compl} one has 
$J_A=A\cdot R$. It is therefore easy to see that (regularly) separating as well as $\zeta$-complete objects are characterized as in (1): $A=X$.

\end{exmp}

For $\CT$ the theory of groups and $\CX={\bf hTop}_{\bullet}$, we already saw in Example \ref{examples second}(5)
that the $n$-th copower $n\cdot {\sf S}^1$ of the 1-sphere ${\sf S}^1$ (with $n$ any set) is $\Gamma$-universal when provided with its fundamental group. But $\pi_1(n\cdot {\sf S}^1)$ is freely generated by the coproduct
injections of $n\cdot {\sf S}^1$. Hence, with Theorem \ref{S1 complete} we conclude:

\begin{cor}\label{final cor}
$(n\cdot {\sf S}^1, \pi_1(n\cdot {\sf S}^1))$ is $\zeta$-complete in ${\rm Aff}^*_{{\sf S}^1}(\CT,{\bf hTop}_{\bullet})$,
for all sets $n$.
\end{cor}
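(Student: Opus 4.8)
The plan is to derive Corollary~\ref{final cor} as a direct instance of Theorem~\ref{S1 complete}, using the algebraic input supplied by Example~\ref{examples second}(5). Concretely, I would first recall that Theorem~\ref{S1 complete} asserts that every copower $n\cdot S_1$ of the object $S_1=(S,<1_S>)$ is $\zeta$-complete, under the sole hypothesis that the needed copowers of $S$ exist in $\CX$. So the entire task reduces to (a) checking that this hypothesis is met in $\CX={\bf hTop}_{\bullet}$ for $S={\sf S}^1$, and (b) identifying the abstract object $n\cdot S_1$ of ${\rm Aff}^*_{{\sf S}^1}(\CT,{\bf hTop}_{\bullet})$ with the concrete object $(n\cdot{\sf S}^1,\pi_1(n\cdot{\sf S}^1))$.

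For step (a), I would invoke Example~\ref{examples first}(5), where it is shown that ${\bf hTop}_{\bullet}$ has all coproducts, formed as in ${\bf Top}_{\bullet}$; in particular all copowers $n\cdot{\sf S}^1$ exist. For step (b), the underlying space of $n\cdot S_1$ is $n\cdot S = n\cdot{\sf S}^1$ by the description of $U$ and of coproducts, and its $\CT$-algebra structure, by Remark~\ref{final} (the singleton-family case) applied iteratively, or more directly by Corollary~\ref{Gamma universal for free}, is the $\CT$-subalgebra $J_n\le\CX(S,n\cdot S)=\pi_1(n\cdot{\sf S}^1)$ generated by the coproduct injections $j_i:{\sf S}^1\to n\cdot{\sf S}^1$. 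The crucial algebraic fact, recalled in Example~\ref{examples second}(5), is that $\pi_1(n\cdot{\sf S}^1)\cong Fn$ is the \emph{free} group on the $n$ coproduct-injection classes; hence the subgroup they generate is all of $\pi_1(n\cdot{\sf S}^1)$, i.e. $J_n=\pi_1(n\cdot{\sf S}^1)$. Therefore $n\cdot S_1=(n\cdot{\sf S}^1,\pi_1(n\cdot{\sf S}^1))$ as objects of ${\rm Aff}^*_{{\sf S}^1}(\CT,{\bf hTop}_{\bullet})$, and Theorem~\ref{S1 complete} yields the $\zeta$-completeness claim.

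A point worth stressing in the write-up is that, although the general theory of the Zariski dual closure operator (Proposition~\ref{zeta closure}) requires coequalizers and co-intersections in $\CX$ --- which fail in ${\bf hTop}_{\bullet}$ --- the notion of $\zeta$-completeness for a \emph{specific} object $(X,A)$, as used in Theorem~\ref{S1 complete}, only refers to $\zeta$-closedness of the single morphism $\varepsilon_{(X,A)}:A\cdot S\to X$, and the proof of that theorem produces the required factorization by hand via the splitting $d:n\cdot S\to A\cdot S$, never invoking coequalizers. So the argument survives the passage to ${\bf hTop}_{\bullet}$ intact, provided the copowers $A\cdot{\sf S}^1$ (with $A=\pi_1(n\cdot{\sf S}^1)$ a possibly large index set of loops) exist --- which again follows from Example~\ref{examples first}(5), since ${\bf hTop}_{\bullet}$ has \emph{arbitrary} coproducts.

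The main obstacle, such as it is, is not a hard estimate but rather the bookkeeping of making sure every step of the proof of Theorem~\ref{S1 complete} is legitimate in a category lacking most limits and colimits: one must verify that $\varepsilon_{(X,A)}$ is a (split, hence regular) epimorphism --- which is fine, since split epis exist everywhere --- and that the diagonalization argument showing $f$ factors through $\varepsilon$ uses only composition, not any universal property unavailable in ${\bf hTop}_{\bullet}$. I would phrase the corollary's proof as essentially two sentences: copowers of ${\sf S}^1$ exist in ${\bf hTop}_{\bullet}$ by Example~\ref{examples first}(5); and since $\pi_1(n\cdot{\sf S}^1)$ is freely generated by the coproduct injections, the object $(n\cdot{\sf S}^1,\pi_1(n\cdot{\sf S}^1))$ \emph{is} the copower $n\cdot{\sf S}^1_1$ of $S_1$, so Theorem~\ref{S1 complete} applies verbatim.
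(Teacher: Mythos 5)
Your proposal is correct and follows essentially the same route as the paper: identify $(n\cdot{\sf S}^1,\pi_1(n\cdot{\sf S}^1))$ with the copower $n\cdot S_1$ in ${\rm Aff}^*_{{\sf S}^1}(\CT,{\bf hTop}_{\bullet})$, using that $\pi_1(n\cdot{\sf S}^1)$ is (freely) generated by the coproduct injections and that ${\bf hTop}_{\bullet}$ has the needed copowers, and then apply Theorem~\ref{S1 complete}. Your added remark that the proof of that theorem uses only copowers (a split epimorphism and a hand-made factorization, no coequalizers) correctly explains why the argument survives in ${\bf hTop}_{\bullet}$, which is exactly the point the paper relies on.
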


\section*{References} 


\end{document}